\DeclareDocumentCommand{\bmd}{O{s} O{y} O{0} O{t} O{u}}
{P \Big\{ B(#1) \in \mathrm d #2 \,\Big \vert \min_{#3\leq z \leq #4} B(z)> 0 , B(#3)=#5 \Big\}}
\DeclareDocumentCommand{\bmdd}{O{s} O{y} O{0} O{t} O{u}}
{P \Big\{ B^\mu(#1) \in \mathrm d #2 \,\Big \vert \min_{#3\leq z \leq #4} B^\mu(z)> 0 , B^\mu(#3)=#5 \Big\}}
\DeclareDocumentCommand{\bmdv}{O{s} O{y} O{0} O{t} O{u}}
{P \Big\{B(#1) \in \mathrm d #2 \,\Big \vert \min_{#3\leq z \leq #4} B(z)> v , B(#3)=#5 \Big\}}
\DeclareDocumentCommand{\bmdb}{O{s} O{y} O{0} O{t} O{u} O{v} }
{P \Big\{B(#1) \in \mathrm d #2 \,\Big \vert \min_{#3\leq z \leq #4} B(z)> 0 , B(#3)=#5, 
	B(#4)=#6 \Big\}}
\DeclareDocumentCommand{\bmdbd}{O{s} O{y} O{0} O{t} O{u} O{v} }
{P \Big\{B^\mu(#1) \in \mathrm d #2 \,\Big \vert \min_{#3\leq z \leq #4} B^\mu(z)> 0 , 	
	B^\mu(#3)=#5, 
	B^\mu(#4)=#6 \Big\}}
\newcommand\numberthis{\addtocounter{equation}{1}\tag{\theequation}}
\numberwithin{equation}{section}
\theoremstyle{remark}
\newtheorem{remark}{Remark}[section]
\theoremstyle{plain}
\newtheorem{theorem}{Theorem}[section]
\newtheorem{lemma}{Lemma}[section]
\newtheorem{prop}{Proposition}[section]
\theoremstyle{definition}
\newtheorem{corollary}{Corollary}[section]
\theoremstyle{definition}
\newtheorem{definition}{Definition}[section]
\title{SOME RESULTS ON THE BROWNIAN MEANDER WITH DRIFT}
\author{F. Iafrate and E. Orsingher}
\address{Sapienza, University of Rome, Italy.}
\email{enzo.orsingher@uniroma1.it, francesco.iafrate@uniroma1.it}
\date{\scriptsize \texttt{\today}}
\begin{document}

\begin{abstract}
In this paper we study the drifted Brownian meander, that is a Brownian motion starting from $ u $
and subject to the condition that $ \min_{ 0\leq z \leq t} B(z)> v  $ with $  u > v $. 
The limiting process for $ u \downarrow v $ is analyzed and the sufficient conditions for its construction
are given. 
We also study the distribution of the maximum of the meander with drift and the related first-passage times. 
The representation of the meander endowed with a drift is provided and extends the well-known result of the 
driftless case. 
The last part concerns the drifted excursion process the distribution of which coincides with the driftless case. 
\end{abstract}
\maketitle

\keywords{\small \textbf{Keywords}: Tightness, Weak convergence, First passage times, Absorbing drifted Brownian motion, Drifted Brownian Excursion
}

\section{Introduction}

The Brownian meander is a Brownian motion 
$\{B(t)\,,\,\, t>0\}$
evolving under the condition that $ \min_{0\leq s\leq t} B(s) > 0 $. 
If the additional condition that $ B(t) = c >0  $ then we have the Brownian excursion which 
is the bridge of the Brownian meander.


Early results in this field emerged in the study of the behaviour of 
random walks conditioned to stay positive where the Brownian meander 
was obtained as the weak limit of such conditional processes (\cite{belkin70}, 
\cite{iglehart74}). In the same spirit the distribution of the maximum 
of the Brownian meander and excursion has been derived in \cite{kaigh78}. 
Further investigations about such distributions can be found in \cite{chung1976}. 
Some important results can also be found in the classical book by 
\citet{ito1996diffusion}. The notion 
of Brownian meander as a conditional Brownian motion and 
problems concerning weak convergence to such processes have been treated 
in \cite{durrett77}. 
More recently analogous results have been obtained in the general setting
of Lévy processes (see for example \cite{chaumont2005}). 

Brownian meanders emerge in path decompositions of the Brownian motion.
In particular Denisov (\cite{denisov84})  shows that a Brownian motion around a maximum 
point can be represented (in law) by means of a two-sided Brownian meander, which is constructed by gluing together 
two meanders. 

These processes also arise in several scientific fields. 
Possible applications range from SPDE's with reflection (\cite{zambotti04}) to enumeration of random graphs (see \cite{janson07} for a survey of the results in this field). 

In this paper we study the meandering process of the drifted Brownian motion $ B^\mu (t) $ and, 
in particular we start by analyzing the joint $ n- $fold distributions
\begin{equation}\label{eq:mdr-joint-intro}
P \bigg\{ \bigcap_{j=1}^n \left( B^\mu(s_j) \in \mathrm d y_j \right) \,\Big \vert \min_{0\leq z \leq t} B^\mu(z)> v , B^\mu(0)=u \bigg \} 
\end{equation}
for $ v < y_j,\,0< s_j \leq t,\, j= 1, \ldots n $ and $u>v $.

In order to examine the interesting case where the starting point coincides with the barrier level $ v $
we use the tools of weak convergence of probability measures. In section 2 we briefly recall the needed results and
in section 3 we show that the required conditions hold. 

We will study some sample path properties of the Brownian meander regarding the maximal oscillation around the starting point 
\begin{equation}\label{eq:lemma-cond-intro}
\lim_{\delta \to 0}
\lim_{u \to v} 
P \Big( \max_{0\leq z \leq \delta } | B^\mu ( z) - B^\mu(0)  | < \eta  \, \Big | 
\min_{ 0\leq z \leq t} B^\mu ( z) > v ,  B^\mu(0) = u \Big) = 1 \qquad \forall \eta > 0
\end{equation}
and the maximal oscillation of the sample paths at an arbitrary point $ s $
\begin{align}
& \lim_{\delta \to 0} \lim_{ u \downarrow v} P \left\{  \max_{s - \delta \leq z \leq s +\delta } |B^\mu (z) | \leq \eta \,\Big | \min_{ 0 \leq z \leq t } B^\mu(z) > v, B^\mu (0) = u\right\} 
\\
&= 
P\left\{ B^\mu (s ) \leq \eta \Big  | \inf_{ 0 < z < t } B^\mu(z) > v, B^\mu(0) = v \right\} \, .
\notag 
\end{align}

In the limit for $ u \to v $ we obtain a stochastic process with marginal distributions equal to 
\begin{align}\label{eq:mdr-0-dist-intro}
&
P \bigg\{  
B^\mu ( s) \in \mathrm d y \bigg |  \inf_{ 0 < z < t } B^\mu( z)  > v, B^\mu(0) = v  
\bigg \} \\
&=
\mathrm d y \left(\frac ts \right)^{\frac 32} 
\frac{
	(y - v) e^{  -\frac{(y-v)^2}{2s}  }
}{
	\int_0^{\infty} w  e^{  - \frac { w^2 } { 2t } + \mu w } \,\mathrm d w 
}
\int_0^\infty  \left( e^{ - \frac{(w-(y-v) )^2}{2 ( t-s)}}  - e^{ - \frac{(w + (y - v ) )^2}{2 ( t-s)}}  \right)
	\frac{e^{\mu w }}{\sqrt{2 \pi( t-s)}} \, \mathrm d w \notag 
\end{align}
$ s < t, \, y > v $. In particular, for $ s = t $, \eqref{eq:mdr-0-dist-intro} reduces to 
\begin{align*}
&
P \bigg\{  
B^\mu ( t) \in \mathrm d y \bigg |  \inf_{ 0 < z < t } B^\mu( z)  > v, B^\mu(0) = v  
\bigg \} 
=
\frac{
	(y - v )   e^{  - \frac{ (y-v)^2}{2t} } e^{\mu y } 
}{
	\int_v^\infty (w-v) e^{  - \frac{ (w-v)^2}{2t}   }  e^{ \mu w} \, \mathrm d w
} \,\mathrm d y 
\numberthis \label{eq:bmd-drift-dist-t-intro} \,\, , \qquad y > v\,\, .
\end{align*}
which for $ \mu = 0 $ coincides with the truncated Rayleigh distribution. 
We obtain the distribution of the maximum of the Brownian meander which, in the simplest case, has the form
\begin{align*}
&
P\bigg\{ \max_{0 \leq z \leq t} B^\mu(z) <x \Big |  \inf_{0 < z < t} B^\mu (z) > 0, B^\mu(0) = 0  \bigg\}
\label{eq:max-bmd-t-v0-fin-intro}
\numberthis
\\
&=
\frac{  
	\sum_{r = -\infty} ^ { + \infty}
	(-1)^r
	e^{  -  \mu r x - \frac{x^2 r^2}{2t}  } 
	+ \mu
	\int_0^\infty e^{ - \frac{w^2}{2t} } e^{ \mu (-1)^{ \left \lfloor \frac w x \right \rfloor} } \,\mathrm d w
}{
	1 + \mu \int_0^\infty 
	e^{  - \frac{y^2}{2t}  } 
	e^{ \mu y } \,\mathrm d y	
}
\end{align*}
and for $  \mu = 0 $  yields the well-known distribution 
\begin{equation}\label{eq:max-bmd-mu0-intro}
P\bigg\{ \max_{0 \leq z \leq t} B(z) <x \, \Big |  \inf_{0 < z < t} B (z) > 0, B(0) = 0  \bigg\}
=
\sum_{r = - \infty} ^ { + \infty} 
(-1)^r
e^{  - \frac{x^2 r^2 }{2t}  } \,\, .
\end{equation}
A related result concerns the first-passage time of the drifted meander. The random variable 
$ T_x = \inf \{ s < t' : B^\mu(s) = x  \} $, $ t'> t $, under the condition that $ \min_{ 0 \leq z \leq t } B^\mu (z) > v $ 
has distribution $ P(T_x > s | \min_{ 0 \leq z \leq t } B^\mu (z) > v, B^\mu(0) = u) $ with a substantially 
different structure for $ s < t  $ and $ t < s < t' $. The effect of the conditioning event in the case $ s > t $ 
corresponds to assuming a Rayleigh-distributed starting point at time $ t $.  

The fifth section of the paper is devoted to the extension to the drifted Brownian meander of the 
relationship
\begin{equation}\label{eq:mdr-repr-0-intro}
\frac{
	\Big|  B(T_0 + s (t - T_0)) \Big|
}{\sqrt{t - T_0}}
\stackrel{i.d.}{=} M(s) \qquad 0 < s < 1
\end{equation}
where $ T_0 = \sup \{s < t : B(s) = 0\} $ (see \citet{pitman99}). 
We are able to show  that 
\begin{align}\label{eq:mdr-repr-intro}
P\left\{   
\frac { 
	\Big | B^\mu\Big ( T_0^\mu + s ( t - T_0^\mu) \Big )  \Big |    
}
{
	\sqrt{t - T^\mu_0}
} \in \mathrm d y 
\right\} = 
\frac 12 
\mathbb E \left[
P\left\{ M^{- \mu \sqrt{ t - T_0^\mu}}(s) \in\mathrm d y \right\} + 
P\left\{ M^{\mu \sqrt{ t - T_0^\mu}} (s)\in\mathrm d y \right\} 
\right]
\end{align}
$ 0 < s < 1 $, where  $M^{\pm \mu \sqrt{ t - T_0^\mu}}$ is the Brownian meander starting at zero
with a time-varying drift $ \pm \mu \sqrt{ t - T_0^\mu} $ and $ T_0^\mu = \sup\{s < t: B^\mu(s) = 0\} $.

Result \eqref{eq:mdr-repr-intro} can also be written as
\begin{equation}
	\frac{
		\Big|  B^\mu(T_0 + s (t - T^\mu_0)) \Big|
	}{\sqrt{t - T^\mu_0}}
	\stackrel{i.d.}{=} M^{\mu X \sqrt{ t - T_0^\mu} }(s) \qquad 0 < s < 1
\end{equation}
where $ X $ is a r.v. taking values $ \pm 1 $ with equal probability, independent from $ M $ and $ T^\mu_0 $ . 
In the last section we give the distribution of the Brownian excursion and show that
	\begin{align*}
	& 
	P\Big\{ B^\mu (s ) \in \mathrm d y \Big | \inf_{ 0 < z < t } B^\mu ( z)> 0, B^\mu ( 0) =  B^\mu ( t) = 0 \Big\}
	\numberthis \label{eq:exc-intro} \\
	&=
	\sqrt \frac{2}{\pi} y^2 \left(\frac{t}{s(t-s)}\right) ^\frac 32 
	e^{- \frac{ y^2t}{ 2 s (t-s)}} \, \mathrm d y \qquad y>0 \,,\,\, s<t
	\\
	&=
	P\Big\{  B(s ) \in \mathrm d y \Big | \inf_{ 0 < z < t } B ( z)> 0, B ( 0) =  B ( t) = 0 \Big\} 
	\end{align*}
which therefore does not depend on the drift $ \mu $.
%
%
%


\section{Preliminaries}\label{sec:prelim}
Let $ \{ B(t),  t \in [0,T] \}$ be a Brownian motion adapted to the natural filtration on some measurable space $ (\Omega, \mathscr F) $
and let $ \{ P_u , u \in \mathbb R \} $ be a family of probability measures such that, under each $ P_u $, $ B $ is a Brownian motion and $  P(B(0) = u) = 1$. 
We consider a drifted Brownian motion $ \{ B^\mu (t), 0 \leq t \leq T\}  $ 
defined as $ B^\mu ( t) = B(t) + \mu t \, , 0 \leq t \leq T  $, 
with $ \mu \in \mathbbm{R} $.
The space $ C[0,T] $ of its sample paths, sometimes indicated as $ \omega = \omega(t) $,
is endowed with the Borel $ \sigma $-algebra $ \mathscr C $ generated by the open sets induced 
by the supremum metric. 

For a given probability space $ (\Omega, \mathscr F, P) $
we define the random function 
\begin{equation}\label{eq:rand-fun}
Y: (\Omega, \mathscr F) \mapsto (C[0,T], \mathscr C) \,\, .
 \end{equation}
We take a probability measure $ \mu $ on $ (C[0,T], \mathscr C) $ defined as
\begin{equation}
\mu(A) = P(Y^{-1} ( A)) \qquad A \in \mathscr C \,\, .
\end{equation}
%
%

For a set $ \Lambda \in \mathscr C $ such that $ \mu(\Lambda) > 0 $  we consider the space 
$
( \Lambda,  \mathscr C, \mu(\,\cdot \, | \Lambda ))
$
%
which is the trace of $ (C[0,T], \mathscr C, \mu) $ on the set $ \Lambda $,  where 
the conditional  probability measure $  \mu(\,\cdot \, | \Lambda ) : \Lambda \cap \mathscr C \mapsto [0,1]  $
is defined in the usual sense as
\begin{equation}
\mu(A  | \Lambda )= \frac{\mu(A \cap \Lambda )}{\mu(\Lambda)} \qquad A \in  \mathscr C \,\, .
\end{equation}
We then construct the space 
$
\big ( Y^{-1} (\Lambda),\mathscr F \cap  Y^{-1} (\Lambda), P ( \,\cdot \, | Y^{-1}(\Lambda) ) \big )
$
where
\begin{equation}
	P ( A | Y^{-1}(\Lambda) ) = \frac{P(A \cap Y^{-1} (\Lambda ) )}{P(Y^{-1} (\Lambda ) )} \qquad 
	\text{ for } A 
	\in \mathscr  F \cap  Y^{-1} (\Lambda) \,\, .
\end{equation}
\begin{definition}
	Given a random function $ Y  $ as in \eqref{eq:rand-fun} and a set $ \Lambda \in \mathscr C $ the \emph{conditional process} $ Y| \Lambda $ is defined  
	as the restriction of $ Y $ to the set $ \Lambda $:
	\begin{equation}
	Y|\Lambda : 
	\big (
	\,  Y^{-1} (\Lambda), \mathscr F  \cap  Y^{-1} (\Lambda), P (\, \cdot \,  | {Y^{-1}(\Lambda)} )  \, \big )
	\mapsto
	( \Lambda, \mathscr C, \mu(\, \cdot \,| \Lambda )  ) \,\, 
	\end{equation} 
\end{definition}
%
%

%
%

The following lemma provides the conditions for a conditional process to be Markov (see \cite{durrett77}). 

\begin{lemma}\label{lem:markov-cond}
	Let $ Y $ be a Markov process on $ C[0,T] $ and let $ \Lambda \in \mathscr C $ such that $ \mu(\Lambda) > 0 $. 
	Let $ \pi_{[0,t]} $ and  $ \pi_{[t,1]} $ be the projection maps on $ C[0,T] $ onto  $ C[0,t] $ and 
	 $ C[t,1] $, respectively. If for all $ t \in [0,T] $ there exist sets $ A_t \in \mathscr B(C[0,t] )$ and $ B_t \in \mathscr B(C[t,1] ) $ such that 
	 $ \Lambda   =   \pi_{[0,t]} ^{-1} A_t  \cap \pi_{[t,1]} ^{-1} B_t$ then $ Y|\Lambda  $ is Markov, 
	 where $ \mathscr B $ denotes the Borel sigma-algebra. 
\end{lemma}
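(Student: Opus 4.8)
The plan is to verify the Markov property of $Y|\Lambda$ directly from the definition, using the factorized structure of $\Lambda$ together with the Markov property of $Y$. Fix $t \in [0,T]$ and a bounded $\mathscr B(C[t,1])$-measurable functional $F$ depending only on $\{\omega(z) : t \le z \le 1\}$; I want to show that $E[F \mid \mathscr F_t, Y^{-1}(\Lambda)]$ is a measurable function of $\omega(t)$ alone (i.e.\ $\sigma(Y(t))$-measurable) under the conditional measure $P(\,\cdot \mid Y^{-1}(\Lambda))$. The first step is to rewrite conditional expectations under $P(\,\cdot\mid Y^{-1}(\Lambda))$ as ratios of expectations under $P$: for $G$ bounded and $\mathscr F_t$-measurable,
\[
E\big[ G\, F\, \mathbbm 1_{Y^{-1}(\Lambda)} \big]
= E\big[ G\, \mathbbm 1_{\pi_{[0,t]}^{-1}A_t}(Y)\; F\, \mathbbm 1_{\pi_{[t,1]}^{-1}B_t}(Y) \big],
\]
where I have used $\Lambda = \pi_{[0,t]}^{-1}A_t \cap \pi_{[t,1]}^{-1}B_t$ to split the indicator. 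The key point is that $\mathbbm 1_{\pi_{[0,t]}^{-1}A_t}(Y)$ and $G$ are $\mathscr F_t$-measurable, while $F\,\mathbbm 1_{\pi_{[t,1]}^{-1}B_t}(Y)$ is a functional of the post-$t$ path.

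Next I would apply the Markov property of $Y$ at time $t$: conditionally on $\mathscr F_t$, the law of $\{Y(z): z \ge t\}$ depends only on $Y(t)$, so
\[
E\big[ F\, \mathbbm 1_{\pi_{[t,1]}^{-1}B_t}(Y) \,\big|\, \mathscr F_t \big] = h(Y(t))
\quad\text{for some measurable } h,
\]
and likewise $E\big[\mathbbm 1_{\pi_{[t,1]}^{-1}B_t}(Y)\mid \mathscr F_t\big] = g(Y(t))$. Plugging this in and using the tower property,
\[
E\big[ G\, F\, \mathbbm 1_{Y^{-1}(\Lambda)}\big]
= E\big[ G\, \mathbbm 1_{\pi_{[0,t]}^{-1}A_t}(Y)\, h(Y(t)) \big],
\qquad
E\big[ G\, \mathbbm 1_{Y^{-1}(\Lambda)}\big]
= E\big[ G\, \mathbbm 1_{\pi_{[0,t]}^{-1}A_t}(Y)\, g(Y(t)) \big].
\]
Since the right-hand sides only involve $G$, the pre-$t$ data, and $Y(t)$, this shows that under $P(\,\cdot \mid Y^{-1}(\Lambda))$ one has $E[F \mid \mathscr F_t \cap Y^{-1}(\Lambda)] = h(Y(t))/g(Y(t))$ on $\{g(Y(t))>0\}$, which is $\sigma(Y(t))$-measurable; a routine argument handles the null set where $g(Y(t))=0$. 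By a monotone-class / functional form of the Markov property, establishing this for a generating class of functionals $F$ (e.g.\ $F = \prod_j f_j(\omega(z_j))$ with $t \le z_1 < \dots < z_k \le 1$) suffices to conclude that $Y|\Lambda$ is Markov.

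The main obstacle is bookkeeping rather than conceptual: one must be careful that the conditioning event $Y^{-1}(\Lambda)$ genuinely splits as the intersection of a pre-$t$ event and a post-$t$ event \emph{compatibly with the filtration}, i.e.\ that $\pi_{[0,t]}^{-1}A_t$ pulls back to an $\mathscr F_t$-measurable set and $\pi_{[t,1]}^{-1}B_t$ to a set measurable with respect to the post-$t$ $\sigma$-algebra; this is exactly what the hypothesis on $\Lambda$ guarantees, and it is what makes the Markov property of $Y$ applicable. A secondary technical point is the treatment of the event $\{g(Y(t)) = 0\}$, which has probability zero under $P(\,\cdot\mid Y^{-1}(\Lambda))$ and can be absorbed into the usual almost-sure qualification. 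I expect the whole argument to be short once these measurability matchings are set up correctly.
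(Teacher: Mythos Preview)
Your argument is correct and is the standard route to this result: split the indicator of $\Lambda$ using the hypothesis, apply the Markov property of $Y$ at time $t$ to reduce the post-$t$ part to a function of $Y(t)$, and take ratios. The measurability bookkeeping and the treatment of $\{g(Y(t))=0\}$ are handled appropriately.

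Note, however, that the paper does not give its own proof of this lemma: it simply states the result and refers the reader to Durrett~\cite{durrett77}. So there is no paper proof to compare against; your write-up effectively supplies what the paper omits, and it is in line with the argument one finds in the cited reference.
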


In the following $ \mu(\,\cdot \,) $ denotes  the Wiener measure on $ (C[0,T], \mathscr C) $. 
For a Brownian motion starting at $ u $ we usually write $ P(\,\cdot \, | B(0) = u ) $  to denote $ P_u(\,\cdot \,) $
to underline the dependence on the starting point. 
The \emph{ drifted Brownian meander} can be represented as a conditional process $ B^\mu | \Lambda_{u,v} $ where 
 the conditioning event $ \Lambda_{u,v} $ is of the form
\[ \Lambda_{u,v} = \Big\{  \min_{ 0\leq z \leq t} B^{\mu}(z)  > v, B^\mu(0) = u\Big\} \, .\]
Analogously the \emph{Brownian excursion} is a conditional process $ B^\mu | \Lambda_{u,v,c}  $ with
\[ \Lambda_{u,v,c} = \Big\{ \min_{ 0\leq z \leq t} B^{\mu}(z)>v,\, B^\mu(0) = u, B^\mu(t) = c \Big\} 
\qquad u,c > v \,\, .
\]

We remark that the conditional processes introduced above are Markovian in light of \autoref{lem:markov-cond}.

For some fixed $ v > 0 $, we need to study the weak convergence of the measures 
$ \mu_{u,v} := \mu( \, \cdot \, | \Lambda_{u,v}) $ as $ u \downarrow v $. 
See \citet{billingsley2009convergence} for a treatise of the general theory of weak convergence. We here recall the main 
concepts we will make use of.

\begin{definition}
	Given a metric space $ (S, \rho) $ and a family $ \Pi $ of probability measures on $ (S, \mathscr B ( S)) $, 
	 $ \mathscr B ( S) $ being the Borel $ \sigma- $field on $ S $, we say that $ \Pi $ is tight if
	 \[
	 \forall \eta > 0 \quad  \exists \text{ compact } K \subset S  \quad  \text{s.t.} \quad \forall \mu \in \Pi \quad  \mu(K) > 1 - \eta 
	 .
	 \]
\end{definition}

The tightness property  is equivalent to  relative compactness if $ (S, \rho) $ is separable and complete,
as estabilished in a well-known theorem due to Prohorov, and it is thus relevant to prove the weak convergence of measures. 
In fact the following theorem holds (see \cite{billingsley2009convergence}, Theorem 7.1, or \cite{karatzas2014brownian}, Theorem 4.15).
\begin{theorem}\label{thm:weak-conv}
	Let $ \{ X^{(n)} \}_n $ and $ X $ be stochastic processes on some probability space $ (\Omega, \mathscr F, P) $ onto $(C([0,T]), \mathscr C )$ and let $ \{\mu_n \}_n $ and $ \mu $, respectively, the induced measures. 
	If for every $ m $ and for every $ 0 \leq t_1 < t_2 < \cdots t_m \leq t $, 
	the finite dimensional distributions of $ (X^{(n)}_{t_1}, \ldots, X^{(n)}_{t_m} ) $ converge 
	to those of $ (X_{t_1}, \ldots, X_{t_m} ) $ and the family $ \{\mu_n\}  $ is tight then $ \mu_n \Rightarrow \mu $.
\end{theorem}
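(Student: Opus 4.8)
The plan is to run the classical Prohorov argument: use tightness to extract weakly convergent subsequences, use the hypothesis on finite-dimensional distributions to identify \emph{every} subsequential limit with $\mu$, and then upgrade this to convergence of the full sequence. Throughout I work on $C[0,T]$ endowed with the supremum metric $\rho$, which is separable and complete (a Polish space), so that Prohorov's theorem applies in both directions.

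First I would invoke Prohorov's theorem. Since $(C[0,T],\rho)$ is separable and complete, tightness of $\{\mu_n\}$ is equivalent to relative compactness in the topology of weak convergence. Hence every subsequence of $\{\mu_n\}$ admits a further subsequence $\{\mu_{n_k}\}$ with $\mu_{n_k}\Rightarrow\nu$ for some probability measure $\nu$ on $(C[0,T],\mathscr C)$. Next I would identify $\nu$. For fixed $0\leq t_1<\cdots<t_m\leq T$ the coordinate projection $\pi_{t_1,\dots,t_m}\colon C[0,T]\to\mathbb R^m$, $\omega\mapsto(\omega(t_1),\dots,\omega(t_m))$, is continuous with respect to $\rho$, so the continuous mapping theorem turns $\mu_{n_k}\Rightarrow\nu$ into weak convergence on $\mathbb R^m$ of the image measures $\mu_{n_k}\circ\pi_{t_1,\dots,t_m}^{-1}$ to $\nu\circ\pi_{t_1,\dots,t_m}^{-1}$. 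By hypothesis these same finite-dimensional laws converge to those of $\mu$, i.e.\ to $\mu\circ\pi_{t_1,\dots,t_m}^{-1}$; uniqueness of weak limits on $\mathbb R^m$ then gives $\nu\circ\pi_{t_1,\dots,t_m}^{-1}=\mu\circ\pi_{t_1,\dots,t_m}^{-1}$. Thus $\nu$ and $\mu$ share all finite-dimensional distributions.

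The key step is to pass from equality of finite-dimensional distributions to equality of the measures on all of $\mathscr C$. Because $C[0,T]$ is separable under $\rho$, its Borel $\sigma$-algebra $\mathscr C$ is generated by the family of finite-dimensional cylinder sets $\{\pi_{t_1,\dots,t_m}^{-1}(H)\colon m\in\mathbb N,\ H\in\mathscr B(\mathbb R^m)\}$, and this family is closed under finite intersections, hence a $\pi$-system. Since $\nu$ and $\mu$ agree on this $\pi$-system and are both probability measures, Dynkin's $\pi$--$\lambda$ theorem forces $\nu=\mu$ on $\mathscr C$. Consequently every weakly convergent subsequence extracted from $\{\mu_n\}$ has the single limit $\mu$.

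Finally I would conclude by a standard metric-space argument. The set of probability measures on the Polish space $C[0,T]$, with the topology of weak convergence, is itself metrizable (for instance by the L\'evy--Prohorov metric), and in a metric space a sequence converges to $\mu$ if and only if it is relatively compact and $\mu$ is its only subsequential limit. Both conditions now hold: relative compactness comes from tightness via Prohorov, and uniqueness of the subsequential limit from the finite-dimensional identification above. Hence $\mu_n\Rightarrow\mu$. I expect the main obstacle to be precisely this identification step, namely verifying that agreement of finite-dimensional distributions pins down the measure on the full Borel field $\mathscr C$; this rests essentially on separability of $C[0,T]$, which guarantees that the cylinder $\pi$-system generates $\mathscr C$. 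The Prohorov extraction and the continuous-mapping identification are routine once that generation property is secured.
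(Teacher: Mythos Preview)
Your argument is correct and is exactly the standard Prohorov--subsequence proof of this classical result. The paper does not actually prove this theorem; it simply cites it from Billingsley (Theorem~7.1) and Karatzas--Shreve (Theorem~4.15), and your proposal reproduces the argument found in those references.
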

In the following section we will compute the limit of the finite dimensional distributions. 
As far as the tightness is concerned we will make use of the following theorem 
(\cite{billingsley2009convergence}, Theorem 7.3)
which characterizes the tightness
of a family of measures induced by a process with a.s. continuous paths 
in terms of its modulus of continuity $ m_\omega^T (\delta) = \sup_{s,t \in [0,T]: |t-s| < \delta} |\omega(s) - \omega(t)| $.

\begin{theorem}\label{thm:tight}
	A sequence of probability measures $ \{P_n\}_n $ on $ (C[0,T], \mathscr C) $ is tight if and only if
	\begin{enumerate}[(i)]
		\item  $ \qquad  \forall \eta > 0 \,\,  \exists a  $ and $  N $ such that 
		$ \displaystyle P_n(\omega :  |\omega(0)| > a  )  < \eta, \,\, \forall n \geq  N ,  $ and 
		\item  $ \qquad  \displaystyle \forall \eta > 0 \quad  
		 \lim_{\delta \downarrow 0} \limsup_{n \to \infty } P_n\{ \omega: m_\omega^T ( \delta) \geq \eta   \} = 0.
		$	
	\end{enumerate}
\end{theorem}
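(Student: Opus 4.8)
The plan is to derive both implications from the Arzel\`a--Ascoli characterization of the relatively compact subsets of $C[0,T]$: a set $A \subseteq C[0,T]$ has compact closure if and only if $\sup_{\omega \in A} |\omega(0)| < \infty$ and $\lim_{\delta \downarrow 0}\sup_{\omega \in A} m^T_\omega(\delta) = 0$. I would first record this fact, together with the observation that for each fixed $\delta$ the map $\omega \mapsto m^T_\omega(\delta) = \sup_{|s-t| < \delta}|\omega(s) - \omega(t)|$ is lower semicontinuous on $C[0,T]$ (being a supremum of the continuous functionals $\omega \mapsto |\omega(s)-\omega(t)|$), so that its sublevel sets are closed. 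These two ingredients do essentially all the work.

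For the ``only if'' direction, suppose $\{P_n\}$ is tight and fix $\eta > 0$. Choose a compact $K$ with $P_n(K) > 1 - \eta$ for every $n$. By Arzel\`a--Ascoli, $a := \sup_{\omega \in K}|\omega(0)|$ is finite, so $P_n(|\omega(0)| > a) \leq P_n(K^c) < \eta$ for all $n$, which is (i). Likewise, given any $\eta' > 0$ there is $\delta_0$ with $\sup_{\omega \in K} m^T_\omega(\delta) < \eta'$ for all $\delta \leq \delta_0$, whence $P_n(m^T_\omega(\delta) \geq \eta') \leq P_n(K^c) < \eta$ for all $n$ and all $\delta \le \delta_0$; letting $n \to \infty$ and then $\delta \downarrow 0$ gives (ii).

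For the ``if'' direction, assume (i) and (ii) and fix $\eta > 0$. Using (i), pick $a$ with $P_n(|\omega(0)| > a) < \eta/2$ for all large $n$, and enlarge $a$ so that this holds for every $n$ (possible because $P_n(|\omega(0)| > a) \to 0$ as $a \to \infty$ for each of the remaining finitely many indices). Using (ii), for each $k \geq 1$ pick $\delta_k$ with $P_n(m^T_\omega(\delta_k) \geq 1/k) < \eta 2^{-k-1}$ for all large $n$, and shrink $\delta_k$ so that this holds for every $n$ (possible because $m^T_\omega(\delta) \to 0$ as $\delta \downarrow 0$ for every continuous $\omega$, hence $P_n$-a.s., for each of the remaining finitely many indices). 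Set
\[
K = \{\omega : |\omega(0)| \leq a\} \cap \bigcap_{k \geq 1}\{\omega : m^T_\omega(\delta_k) \leq 1/k\} .
\]
Then $K$ is closed by the semicontinuity noted above, it satisfies both Arzel\`a--Ascoli conditions, hence is compact, and
\[
P_n(K^c) \leq P_n(|\omega(0)| > a) + \sum_{k \geq 1} P_n\big(m^T_\omega(\delta_k) > 1/k\big) < \eta
\]
for every $n$, so $\{P_n\}$ is tight.

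The real content is the Arzel\`a--Ascoli theorem; everything else is bookkeeping. I expect the only genuinely delicate point to be, in the ``if'' direction, the passage from estimates valid for all sufficiently large $n$ to a single compact set valid for \emph{every} $n$: the finitely many exceptional indices must be absorbed into the choice of $a$ and of the sequence $(\delta_k)$, and this is precisely where the almost-sure continuity of the sample paths under each $P_n$ is used (it guarantees $m^T_\omega(\delta) \downarrow 0$ as $\delta \downarrow 0$ for $P_n$-a.e.\ $\omega$). Confirming that $K$ is closed and that the two compactness conditions transfer from the defining inequalities to $K$ is routine.
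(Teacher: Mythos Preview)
Your proof is correct and is essentially the standard Arzel\`a--Ascoli argument. Note, however, that the paper does not actually prove this theorem: it is quoted as a classical result (Theorem~7.3 of Billingsley, \emph{Convergence of Probability Measures}) and used as a tool without proof, so there is no ``paper's own proof'' to compare against. Your write-up matches the textbook argument that Billingsley gives.
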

%
%
%


Condition $ (ii) $ is difficult to prove directly so we use the Kolmogorov-Cent\v sov theorem 
which necessitates bounds on the expectation of the increments. 
We first recall the Kolmogorov-\v Centsov theorem (\cite{karatzas2014brownian}, Th. 2.8).
\begin{theorem}\label{thm:kolmo-cen}
Suppose that a process $ X = \{X(t), 0 \leq t \leq T \} $ on a probability space $ (\Omega, \mathscr F, P) $ satisfies the condition 
\begin{equation}\label{eq:kolmo-cen-cond}
\mathbb E | X(t) - X(s) | ^\alpha \leq C | t-s|^{1 + \beta} \,\,,\quad 0 \leq s,t \leq T
\end{equation}
for some positive constants $ \alpha, \beta $ and $ C $. Then there exists a continuous modification of $ X $ which is locally H\"older continuous 
with exponent $ \gamma \in (0, \beta / \alpha) $ i.e., 
\begin{equation}
P\left\{  \omega : \sup_{ \substack{  s,t \in [0,T] \\ |t-s| < h(w)} }
\frac{  | \tilde X(\omega, t)    -   \tilde X(\omega, s)  |}{|t-s|^\gamma}  
\leq \delta 
\right\} = 1
\end{equation}
where $ h $ is an a.s. positive random variable and $ \delta > 0 $ is an appropriate constant. 
\end{theorem}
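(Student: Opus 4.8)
The plan is to follow the classical dyadic chaining argument (as in \cite{karatzas2014brownian}). First I would convert the moment bound \eqref{eq:kolmo-cen-cond} into a tail estimate via Markov's inequality: for every $ \varepsilon > 0 $,
\[
P\big( |X(t) - X(s)| \geq \varepsilon \big) \leq C\, \varepsilon^{-\alpha}\, |t-s|^{1+\beta}, \qquad 0 \leq s,t \leq T .
\]
Then, fixing $ \gamma \in (0,\beta/\alpha) $ and introducing the dyadic mesh $ t^{(n)}_k = k T 2^{-n} $, $ k = 0,1,\dots,2^n $, I would apply this tail estimate to each consecutive increment with $ \varepsilon = 2^{-\gamma n} $ and take a union bound over $ k $:
\[
P\Big( \max_{1 \leq k \leq 2^n} |X(t^{(n)}_k) - X(t^{(n)}_{k-1})| \geq 2^{-\gamma n} \Big) \leq 2^n\, C\, 2^{\gamma \alpha n}\, (T 2^{-n})^{1+\beta} = C\, T^{1+\beta}\, 2^{-n(\beta - \gamma \alpha)} .
\]
Since $ \gamma\alpha < \beta $ the exponent $ \beta - \gamma\alpha $ is strictly positive, so the right-hand side is summable in $ n $ and the Borel--Cantelli lemma furnishes an a.s.\ finite random index $ n^\ast = n^\ast(\omega) $ with $ \max_k |X(t^{(n)}_k) - X(t^{(n)}_{k-1})| < 2^{-\gamma n} $ for all $ n \geq n^\ast $.

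The core of the argument, and the step I expect to be the main obstacle, is the chaining estimate. Writing $ D = \bigcup_n \{ t^{(n)}_k : 0 \leq k \leq 2^n \} $ for the set of dyadic rationals in $ [0,T] $, I would show that on the full-measure event above, for all $ s, t \in D $ with $ 0 < t - s < 2^{-n^\ast(\omega)} $ one has $ |X(t,\omega) - X(s,\omega)| \leq K_\gamma\, |t-s|^\gamma $, with $ K_\gamma = 2/(1 - 2^{-\gamma}) $ depending only on $ \gamma $. This is obtained by choosing the integer $ m \geq n^\ast(\omega) $ with $ 2^{-(m+1)} \leq t - s < 2^{-m} $, expressing $ s $ and $ t $ through their dyadic approximations at the successive levels $ m, m+1, \dots $, and telescoping: a walk from $ s $ up to a level-$ m $ gridpoint and then down to $ t $ costs at most $ 2 \sum_{j > m} 2^{-\gamma j} $, and bounding this geometric tail against $ 2^{-\gamma m} \leq 2^{\gamma} |t-s|^\gamma $ yields the claim. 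The delicate bookkeeping lies in identifying which gridpoints enter the telescoping and in recording that the Hölder bound is only \emph{local}, since the admissible scale $ 2^{-n^\ast} $ is random.

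It then remains to manufacture the continuous modification. On the full-measure event just described, $ X|_D $ is uniformly continuous on the dense set $ D \subset [0,T] $, hence extends uniquely to a continuous function $ t \mapsto \tilde X(t,\omega) $ on $ [0,T] $; on the complementary null set I would simply set $ \tilde X \equiv 0 $. The Hölder bound carries over to $ \tilde X $ by continuity, which gives the displayed statement with $ h(\omega) = 2^{-n^\ast(\omega)} $, a.s.\ positive, and $ \delta = K_\gamma $. Finally, to check that $ \tilde X $ is indeed a modification of $ X $, I would fix $ t \in [0,T] $ and choose $ D \ni t_n \to t $: the hypothesis \eqref{eq:kolmo-cen-cond} gives $ X(t_n) \to X(t) $ in $ L^\alpha $, hence in probability, while $ \tilde X(t_n) = X(t_n) \to \tilde X(t) $ almost surely, so uniqueness of limits in probability forces $ P\big( X(t) = \tilde X(t) \big) = 1 $. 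This completes the plan.
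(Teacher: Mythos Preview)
The paper does not actually prove this theorem: it is merely \emph{recalled} from \cite{karatzas2014brownian}, Th.~2.8, as background material for the tightness argument, and no proof is given in the text. Your proposal is precisely the standard dyadic chaining argument from that reference, and it is correct as written; there is nothing to compare against in the paper itself.
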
 
The  Kolmogorov-\v Centsov theorem can be exploited to prove the tightness property of a family of measures as 
stated in the following result (\cite{karatzas2014brownian}, Problem 4.11) 
\begin{prop}\label{pr:kolmo-cen-tight}
	Let $ \{ X^{(m)}, m\geq 1 \} $ be a sequence of stochastic processes $ X^{(m)} = \{X^{(m)}(t), 0 \leq t \leq T \} $
	on $ (\Omega, \mathscr F, P) $, satisfying the following conditions
	\begin{enumerate}[(i)]
		\item  $ \qquad  \sup_{m \geq 1 } \mathbb E |X_0^{(m)}|^\nu < \infty $, 
		\item  $ \qquad \sup_{m \geq 1 }  \mathbb E | X^{(m)}(t) - X^{(m)}(s) | ^\alpha \leq C_T | t-s|^{1 + \beta} \,\,,\quad 0 \leq s,t \leq T $
	\end{enumerate}
for some positive constants $ \alpha, \beta, \nu $ and $ C_T $ (depending on $ T $). Then the probability measures 
$ P_m = P(X_m^{-1}) \,, m\geq 1$  induced by these processes form a tight sequence. 
\end{prop}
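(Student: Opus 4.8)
The plan is to verify the two conditions of \cref{thm:tight} for the sequence $\{P_m\}_{m\ge1}$, feeding in the quantitative estimate that underlies \cref{thm:kolmo-cen}. As a preliminary remark, observe that assumption~(ii) is exactly the hypothesis of \cref{thm:kolmo-cen} (with $C=C_T$ and the same exponents $\alpha,\beta$), so each $X^{(m)}$ admits a continuous modification; I would replace each $X^{(m)}$ by such a modification, which does not change $P_m$ and makes it a genuine Borel probability measure on $C[0,T]$.

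Condition~(i) of \cref{thm:tight} should be immediate. By Markov's inequality and assumption~(i),
\[
P_m\{\omega:|\omega(0)|>a\}=P\{|X^{(m)}(0)|>a\}\le \frac{\mathbb E|X_0^{(m)}|^\nu}{a^\nu}\le \frac{M}{a^\nu},\qquad M:=\sup_{m\ge1}\mathbb E|X_0^{(m)}|^\nu<\infty,
\]
so, given $\eta>0$, it suffices to choose $a>(M/\eta)^{1/\nu}$; the bound then holds for every $m\ge1$, and one may take $N=1$.

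The substance of the argument is condition~(ii) of \cref{thm:tight}, namely $\lim_{\delta\downarrow0}\limsup_{m\to\infty}P_m\{m_\omega^T(\delta)\ge\eta\}=0$ for each $\eta>0$. Here I would run the dyadic chaining argument from the proof of \cref{thm:kolmo-cen}, but keep track of the constants. Fix an exponent $\gamma\in(0,\beta/\alpha)$ and set $\theta:=\beta-\gamma\alpha>0$. On the dyadic mesh of step $2^{-n}$, a union bound followed by assumption~(ii) and Markov's inequality gives, uniformly in $m$,
\[
P\Big(\max_{0\le k<2^n}\big|X^{(m)}\big(\tfrac{(k+1)T}{2^n}\big)-X^{(m)}\big(\tfrac{kT}{2^n}\big)\big|\ge (T2^{-n})^\gamma\Big)\le \sum_{k=0}^{2^n-1}\frac{C_T\,(T2^{-n})^{1+\beta}}{(T2^{-n})^{\gamma\alpha}}=C_T\,T^{1+\beta-\gamma\alpha}\,2^{-n\theta}.
\]
I would then invoke the deterministic chaining estimate from the same proof: if, for every level $j\ge n$, all level-$j$ dyadic increments are smaller than $(T2^{-j})^\gamma$, then $|\omega(t)-\omega(s)|\le K(\gamma)\,(T2^{-n})^\gamma$ whenever $|t-s|\le T2^{-n}$, where $K(\gamma)$ depends only on $\gamma$ (this comes from summing the geometric series $\sum_{j\ge n}(T2^{-j})^\gamma$, and extends from dyadic $s,t$ to all of $[0,T]$ by continuity of $\omega$). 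Summing the displayed bound over the levels $j\ge n$ then produces a constant $C'=C'(C_T,T,\alpha,\beta,\gamma)$ such that
\[
P_m\{\,m_\omega^T(T2^{-n})\ge K(\gamma)(T2^{-n})^\gamma\,\}\le C'\,2^{-n\theta}\qquad\text{for every }m\ge1,\ n\ge1.
\]
Finally, given $\eta>0$ and $\delta>0$, I would choose $n=n(\delta)$ with $T2^{-(n+1)}<\delta\le T2^{-n}$; since $\delta\mapsto m_\omega^T(\delta)$ is nondecreasing and $K(\gamma)(T2^{-n})^\gamma<\eta$ once $n$ is large, the last display yields $P_m\{m_\omega^T(\delta)\ge\eta\}\le C'2^{-n(\delta)\theta}$ for all $m$, and letting $\delta\downarrow0$ (hence $n(\delta)\to\infty$) sends the right-hand side to $0$. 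With both conditions of \cref{thm:tight} checked, $\{P_m\}_{m\ge1}$ is tight.

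The one genuine obstacle, as I see it, is the uniformity in $m$. The textbook form of \cref{thm:kolmo-cen} only asserts the existence of an a.s.\ positive, $\omega$-dependent H\"older radius $h(\omega)$, which carries no information across the sequence; one must reopen its proof and notice that $m$ enters solely through the constant $C_T$ in assumption~(ii), which is assumed independent of $m$, so that both the decay rate $2^{-n\theta}$ and the prefactor $C'$ can be fixed once and for all. The remaining ingredients — the union bound, Markov's inequality, the geometric summation and the chaining inequality — are classical and manifestly free of $m$.
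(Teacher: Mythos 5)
Your proof is correct and is essentially the intended argument: the paper states this result without proof, citing it as Problem 4.11 of Karatzas--Shreve and only remarking that conditions (i) and (ii) imply those of \cref{thm:tight}, and your write-up supplies exactly the standard verification — Chebyshev for the initial values and the Kolmogorov--\v{C}entsov dyadic chaining rerun with constants kept uniform in $m$. Your closing observation that the uniformity in $m$ is the only point requiring one to reopen the proof of \cref{thm:kolmo-cen} (rather than quote its conclusion) is precisely the right caveat.
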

It is easily seen that conditions $ (\emph{i}) $  and $ (\emph{ii}) $ of Proposition \autoref{pr:kolmo-cen-tight} imply the 
corresponding conditions of \autoref{thm:tight}. 

Returning to our main task, once proved that the collection of measures $ \{\mu_{u,v}\}_{u > v}$ satisfies the conditions of \autoref{thm:weak-conv} and \autoref{thm:tight}, by exploiting Proposition \autoref{pr:kolmo-cen-tight}, we are able to assess the existence of some process whose finite dimensional distributions 
coincide with those of the weak limit $ \mu_v $ of $ \mu_{u,v}$ when $ u \downarrow v $. This measure will coincide with that induced by
$ B^\mu | \Lambda_v $ where 
\[
\Lambda_v = \Big \{\omega : \inf_{ 0 < z < T } \omega(z) > v , \omega(0)  = v \Big \}.
\]

This means that the continuous mapping theorem holds, i.e.
 $ \mu_{u,v} \circ g^{-1}$  \raisebox{-3pt} {$\xRightarrow{u\to v }$ }$  \mu_{v} \circ g^{-1} $ for any bounded uniformly continuous $ g $. 
 Using this fact we can derive the distribution of the maximum of $ \max B^\mu | \Lambda_v $ as the weak limit 
 of $ \max B^\mu | \Lambda_{u,v} $. 
 This was done in the driftless case by \citet{iglehart77}.

\section{Weak convergence to the Brownian meander with drift}\label{sec:joint}
The drifted Brownian meander can be viewed as a Brownian motion $ B^\mu $
with drift restricted on the subsets of continuous functions 
$ \Lambda_{u,v} = \{  \omega \in C[0,T]:  \min_{ 0\leq z \leq T} \omega(z) > v, B^\mu(0) = u \} $, with $ u>v $.
The probability measure we consider here is applied to the events 
$ A = \big \{ \bigcap_{j=1}^n \{ B^{\mu} (s_j) \in \mathrm d y_j \} \big \} $ under the 
condition $ \{ \min_{ 0\leq z \leq T} B^\mu (z) > v, B^\mu(0)=u \} $. 
For different values of $ v $ and of the starting point $ u $ we have sequences of 
processes for which we study the distributional structure. 

In order to write down explicitly  the conditional  probability of the event $ A $ we
need the distribution of an absorbing Brownian motion travelling on the set 
$ (v, \infty) $ starting at point $ u>v $ and with an absorbing 
barrier placed at $ v $ that is
\begin{align}\label{eq:bm-drift-recall-new}
&P\left\{  B^\mu(s) \in \mathrm d y\, ,  \min_{0\leq z \leq s} B^\mu(z)> v \Big \vert   B^\mu(0) = u\right \} 
=\\
&=\left( 
\frac{
	e^{- \frac{ (y- u - \mu s)^2}{2 s } } 
}{ %
	\sqrt{2\pi s }} - 
e^{ - 2 \mu (u - v) } 
\frac{
	e^{- \frac{ (y + u - 2v - \mu s)^2}{2 s } } 
}{ %
	\sqrt{2\pi s } }\right)	\, \mathrm d y \notag \\
&=
\Big( 
	e^{- \frac{ (y- u )^2}{2 s } } -
	e^{- \frac{ (2v - y - u)^2}{2 s } }
\Big)
	e^{ - \frac{\mu^2 s}{2} + \mu ( y-u) }
  \frac{ \mathrm d y}{\sqrt{2 \pi s }} \qquad y>v, u>v, s>0 \,\, .\notag 
\end{align} 
For $ \mu >0 $, the probability of the Brownian motion of never being captured 
by the absorbing barrier is equal to
\[ \lim_{s \to \infty } 
	P\Big\{ \min_{0\leq z \leq s} B^\mu(z)> v 
	\Big \vert   B^\mu(0) = u \big\} = 
	1 - e^{-2 \mu (u-v)} 	
 \]
and increases as the starting point goes further from the barrier. 
\begin{theorem}\label{thm:pre-mdr-joint}
	In view of \eqref{eq:bm-drift-recall-new} we have the following joint distributions
	\begin{align*}
	&P \bigg\{ \bigcap_{j=1}^n \left( B^\mu(s_j) \in \mathrm d y_j \right) \,\Big \vert \min_{0\leq z \leq t} B^\mu(z)> v , B^\mu(0)=u \bigg \} = \numberthis \label{eq:bmd-joint-dist-drift-new} \\
	&=
	\prod_{j=1}^{n} \Bigg[
	\frac{
		e^{- \frac{ (y_j - y_{j-1} - \mu( s_j - s_{j-1} ) )^2}{2 (s_j - s_{j-1}) } } 
	}{ %
		\sqrt{2\pi (s_j - s_{j-1}) } } - 
	e^{ - 2 \mu (y_{j-1} - v)} 
	\frac{
		e^{- \frac{ (y_j + y_{j-1} - 2v - \mu( s_j - s_{j-1} ) )^2}{2 (s_j - s_{j-1}) } } 
	}{ %
		\sqrt{2\pi (s_j - s_{j-1}) } }	\Bigg] \mathrm d y_j \quad \times \\
	& \qquad \times \quad 
	\frac{\displaystyle 
		P \Big\{   \min_{s_{n}\leq z \leq t} B^\mu(z)> v  \, \Big \vert B^\mu (s_n) = y_n 
		  \Big \} 
	}{
		\displaystyle
		P \Big \{  \min_{0\leq z \leq t} B^{\mu}(z)> v \,  \Big \vert  B^\mu(0) = u  \Big\}
	}  
	\\
	&=
	\prod_{j=1}^{n} \Bigg[
	\frac{
		e^{- \frac{ (y_j - y_{j-1} )^2}{2 (s_j - s_{j-1}) } } 
	}{ %
		\sqrt{2\pi (s_j - s_{j-1}) } } - 
	\frac{
		e^{- \frac{ (2v - y_j - y_{j-1} )^2}{2 (s_j - s_{j-1}) } } 
	}{ %
		\sqrt{2\pi (s_j - s_{j-1}) } }	\Bigg] \mathrm d y_j 
	\frac{ 
		\displaystyle
		\int_v^\infty 
		\bigg( 
		\frac{ 
			e^{- \frac{ (w - y_{n} )^2}{2 (t - s_{n}) } } - 
			e^{- \frac{ (2v - w - y_{n} )^2}{2 (t - s_{n}) } } 
		} 
		{ 
			 \sqrt{2\pi (t - s_{n}) }
		} 
		\bigg)  e^{\mu w} \,\mathrm d w 
		}{
		\displaystyle
		\int_v^\infty 
		\bigg(
		\frac{ 
		e^{- \frac{ (w - u )^2}{2 t } } - 
		e^{- \frac{ (2v - w - u )^2}{2 t } } 
	}{
		  \sqrt{2\pi t } 
}
		\bigg)  e^{\mu w} \,\mathrm d w  
	}  
	\end{align*}
	with $ y_i > v, i=1, \ldots,n $,  $ y_0=
	u $, $ 0<s_0 < s_1 < \ldots < s_j < \ldots < s_n < t\,,\,\, u>v $.
\end{theorem}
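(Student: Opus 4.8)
The plan is to compute the conditional density in \eqref{eq:bmd-joint-dist-drift-new} directly from the Markov property of $B^\mu$, using the absorbing transition density \eqref{eq:bm-drift-recall-new} as the building block. First I would write the conditioning event $\Lambda_{u,v}=\{\min_{0\le z\le t}B^\mu(z)>v,\,B^\mu(0)=u\}$ as the intersection of the event $\{\min_{0\le z\le s_n}B^\mu(z)>v\}$ with $\{\min_{s_n\le z\le t}B^\mu(z)>v\}$, so that the required conditional probability is a ratio whose numerator is
\[
P\Big\{ \bigcap_{j=1}^n (B^\mu(s_j)\in\mathrm dy_j),\ \min_{0\le z\le t}B^\mu(z)>v \,\Big\vert\, B^\mu(0)=u\Big\}
\]
and whose denominator is $P\{\min_{0\le z\le t}B^\mu(z)>v\mid B^\mu(0)=u\}$. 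For the numerator, I would insert the intermediate times $0=s_0<s_1<\dots<s_n$, and use the Markov property at each $s_j$ together with the fact that on the event $\{B^\mu(s_1)=y_1,\dots,B^\mu(s_j)=y_j\}$ the constraint $\min_{s_{j-1}\le z\le s_j}B^\mu(z)>v$ only involves the bridge between consecutive points; this factorizes the numerator into a product of $n$ absorbing transition densities of the form \eqref{eq:bm-drift-recall-new} (with $s\mapsto s_j-s_{j-1}$, $u\mapsto y_{j-1}$, $y\mapsto y_j$) times the tail factor $P\{\min_{s_n\le z\le t}B^\mu(z)>v\mid B^\mu(s_n)=y_n\}$. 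Dividing by the denominator yields the first displayed equality.

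Next I would simplify. For the product over $j=1,\dots,n$, I use the third form of \eqref{eq:bm-drift-recall-new}, namely
\[
\Big(e^{-\frac{(y_j-y_{j-1})^2}{2(s_j-s_{j-1})}}-e^{-\frac{(2v-y_j-y_{j-1})^2}{2(s_j-s_{j-1})}}\Big)\,\frac{e^{-\frac{\mu^2(s_j-s_{j-1})}{2}+\mu(y_j-y_{j-1})}}{\sqrt{2\pi(s_j-s_{j-1})}}.
\]
The exponential prefactors telescope: $\prod_{j=1}^n e^{-\frac{\mu^2(s_j-s_{j-1})}{2}+\mu(y_j-y_{j-1})}=e^{-\frac{\mu^2 s_n}{2}+\mu(y_n-u)}$. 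For the two tail probabilities I would integrate \eqref{eq:bm-drift-recall-new} over $y\in(v,\infty)$: the numerator tail gives $\int_v^\infty\big(e^{-\frac{(w-y_n)^2}{2(t-s_n)}}-e^{-\frac{(2v-w-y_n)^2}{2(t-s_n)}}\big)\frac{e^{-\frac{\mu^2(t-s_n)}{2}+\mu(w-y_n)}}{\sqrt{2\pi(t-s_n)}}\,\mathrm dw$, and similarly the denominator tail gives the analogous integral with $t-s_n\mapsto t$ and $y_n\mapsto u$. In the ratio of these two tails, the factors $e^{-\mu^2 t/2}$ survive only partially, so I must combine them with the telescoped product: the remaining $\mu$-dependent exponentials are arranged precisely so that, after pulling $e^{\mu w}$ inside the integrals and cancelling $e^{-\mu y_n}$, $e^{-\mu u}$, $e^{\mu y_n}$, $e^{\mu u}$, and the surviving $e^{-\mu^2 s_n/2}$ against $e^{-\mu^2 t/2}/e^{-\mu^2(t-s_n)/2}=e^{-\mu^2 s_n/2}$, everything $\mu$-exponential outside the two $w$-integrals cancels. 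This bookkeeping is the one genuinely delicate step; I would carry it out carefully to land on the final claimed expression with the clean $e^{\mu w}\,\mathrm dw$ integrals.

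The main obstacle is therefore not conceptual but the exponent accounting: one has to verify that the drift-dependent Gaussian normalizations $e^{-\mu^2(\cdot)/2}$ and the linear terms $e^{\mu(\cdot)}$ coming from (a) the $n$ transition steps, (b) the numerator tail integral, and (c) the denominator tail integral conspire to leave exactly the driftless Brownian product $\prod_j\big(e^{-\frac{(y_j-y_{j-1})^2}{2(s_j-s_{j-1})}}-e^{-\frac{(2v-y_j-y_{j-1})^2}{2(s_j-s_{j-1})}}\big)/\sqrt{2\pi(s_j-s_{j-1})}$ multiplied by the ratio of the two $w$-integrals each carrying a single factor $e^{\mu w}$. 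A useful sanity check along the way is the case $n=1$, $s_1\downarrow 0$, which should reduce \eqref{eq:bmd-joint-dist-drift-new} to a triviality, and the case $\mu=0$, which should reproduce the classical driftless meander joint density; both can be read off the final formula. I would also note explicitly that the Markov property of the conditional process, invoked to justify the factorization, is precisely \autoref{lem:markov-cond} applied to $\Lambda_{u,v}=\pi_{[0,s_n]}^{-1}A\cap\pi_{[s_n,1]}^{-1}B$ with $A=\{\omega:\min\omega>v,\omega(0)=u\}$ and $B=\{\omega:\min\omega>v\}$.
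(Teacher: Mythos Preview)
Your proposal is correct and follows essentially the same approach as the paper: write the conditional probability as a ratio, factorize the numerator via the Markov property into a product of absorbing transition densities \eqref{eq:bm-drift-recall-new} times the tail factor, and divide. The paper's proof is actually terser than yours---it stops after invoking Markovianity and \eqref{eq:bm-drift-recall-new} ``applied successively'', leaving the passage from the first displayed expression to the second (the telescoping of the $e^{-\mu^2(\cdot)/2+\mu(\cdot)}$ factors) implicit; your explicit bookkeeping of that cancellation is a welcome addition.
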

\begin{proof}
	
	\begin{align*}
	&
		P \bigg\{ \bigcap_{j=1}^n \left( B^\mu(s_j) \in \mathrm d y_j \right) \,\Big \vert \min_{0\leq z \leq t} B^\mu(z)> v , B^\mu(0)=u \bigg \}
		\numberthis \label{eq:joint-markov-a}
		\\
		&
		= 
		P_u \bigg\{ \bigcap_{j=1}^n \left( B^\mu(s_j) \in \mathrm d y_j  ,  \min_{s_{j-1} < z \leq s_j} B^\mu(z)> v \right) 
		\bigg \}
		P_u \bigg\{
		 \min_{0\leq z \leq t} B^\mu(z)> v \bigg \} ^{-1
		 }
	\end{align*}
	Considering the numerator of \eqref{eq:joint-markov-a} we
	have that
	\begin{align*}
	&P_u \bigg\{ \bigcap_{j=1}^n \left( B^\mu(s_j) \in \mathrm d y_j  ,  \min_{s_{j-1} < z \leq s_j} B^\mu(z)> v \right) 
	\bigg \} \numberthis \label{eq:joint-markov-b}
	\\
	&= 
	\mathbb E \left\{
	P_u \bigg\{ \bigcap_{j=1}^n \left( B^\mu(s_j) \in \mathrm d y_j  ,  \min_{s_{j-1} < z \leq s_j} B^\mu(z)> v \right) 
	\bigg | \mathscr F_{s_{n-1}}
	\bigg \} 
	\right\}
	\\
	&=
	\mathbb E \bigg \{
	P_u 
	\bigg\{ \bigcap_{j=1}^{n-1} \left( B^\mu(s_j) \in \mathrm d y_j  ,  \min_{s_{j-1} < z \leq s_j} B^\mu(z)> v \right) 
	\bigg \} \\
	& \qquad \times 
	P_u \bigg\{   B^\mu(s_{n}) \in \mathrm d y_n  ,  \min_{s_{n-1} < z \leq s_n} B^\mu(z)> v  
	\, \bigg | \mathscr F_{s_{n-1}}
	\bigg \} 
	\bigg\}
	\end{align*}
	
	Markovianity and result \eqref{eq:bm-drift-recall-new} applied successively yield 
	\eqref{eq:bmd-joint-dist-drift-new}. 
\end{proof}

\begin{corollary}
	For $ n=1 $, we obtain from \eqref{eq:bmd-joint-dist-drift-new} the one dimensional 
	distribution 
	\begin{align*}
		&P \Big\{  
		B^{\mu}  (s) \in \mathrm d y \, \Big | \min_{0\leq z \leq t} B^\mu(z)> v , B^\mu(0)=u 
		 \Big\}  \\
		&=
		 \Bigg[
		\frac{
			e^{- \frac{ (y - u - \mu s )^2}{2 s } } 
		}{ %
			\sqrt{2\pi s } } - 
		e^{ - 2 \mu (u - v)} 
		\frac{
			e^{- \frac{ (y + u - 2v - \mu s )^2}{2 s } } 
		}{ %
			\sqrt{2\pi s } }	\Bigg]   \cdot  
		\frac{\displaystyle 
			P \Big\{   \min_{s\leq z \leq t} B^\mu(z)> v  \, \Big \vert B^\mu (s) = y 
			\Big \} 
		}{
			\displaystyle
			P \Big \{  \min_{0\leq z \leq t} B^{\mu}(z)> v \,  \Big \vert  B^{\mu}(0) = u  \Big\}
		}  \mathrm d y
	  \numberthis \label{eq:bmd-onedim-dist-drift-new}
	\end{align*}
	for $ y>v, \, u>v $.
\end{corollary}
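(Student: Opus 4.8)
The plan is to obtain \eqref{eq:bmd-onedim-dist-drift-new} as the special case $n=1$ of \autoref{thm:pre-mdr-joint}. In the product appearing in \eqref{eq:bmd-joint-dist-drift-new} only the index $j=1$ survives; with the conventions $y_0 = u$ and $s_0 = 0$ (so that $s_1 - s_0 = s$, $y_1 = y$) the single bracketed factor becomes exactly the absorbing-Brownian-motion density \eqref{eq:bm-drift-recall-new} evaluated between time $0$ (starting point $u$) and time $s$ (position $y$), namely
\[
\frac{e^{-\frac{(y-u-\mu s)^2}{2s}}}{\sqrt{2\pi s}} - e^{-2\mu(u-v)}\frac{e^{-\frac{(y+u-2v-\mu s)^2}{2s}}}{\sqrt{2\pi s}},
\]
while the ratio of survival probabilities in \eqref{eq:bmd-joint-dist-drift-new} becomes $P\{\min_{s\le z\le t}B^\mu(z)>v\mid B^\mu(s)=y\}\big/P\{\min_{0\le z\le t}B^\mu(z)>v\mid B^\mu(0)=u\}$, since $s_n = s_1 = s$ and $y_n = y_1 = y$. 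Multiplying by $\mathrm d y$ gives precisely \eqref{eq:bmd-onedim-dist-drift-new}.

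To make the step fully self-contained (rather than merely quoting the $n$-fold formula), I would instead reprove it directly in the one-dimensional case, which is lighter than the general argument in \autoref{thm:pre-mdr-joint}. Writing the conditional probability as a ratio, the numerator is $P_u\{B^\mu(s)\in\mathrm d y,\ \min_{0\le z\le s}B^\mu(z)>v,\ \min_{s\le z\le t}B^\mu(z)>v\}$. Applying the Markov property at time $s$ factorizes this as $P_u\{B^\mu(s)\in\mathrm d y,\ \min_{0\le z\le s}B^\mu(z)>v\}\cdot P\{\min_{s\le z\le t}B^\mu(z)>v\mid B^\mu(s)=y\}$, and the first factor is given verbatim by \eqref{eq:bm-drift-recall-new}. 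Dividing by $P_u\{\min_{0\le z\le t}B^\mu(z)>v\}$ then yields \eqref{eq:bmd-onedim-dist-drift-new}.

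There is essentially no substantive obstacle here; the only points requiring a little care are bookkeeping ones. First, one must note that the hypothesis $0 < s_0$ in \autoref{thm:pre-mdr-joint} is used only to keep the generic increment notation clean, and that the boundary choice $s_0 = 0$, $y_0 = u$ is admissible because \eqref{eq:bm-drift-recall-new} is exactly the transition density of the absorbed process started at $u$ at time $0$. Second, if one also wishes to record the fully explicit form (as in the second displayed line of \eqref{eq:bmd-joint-dist-drift-new}), one should rewrite the two survival probabilities via \eqref{eq:bm-drift-recall-new} integrated over the terminal position against $e^{\mu w}\,\mathrm d w$, and check that the common factor $e^{-\mu^2 s/2 + \mu(y-u)}$ (respectively the analogous factors in numerator and denominator) cancels, leaving the drift only inside the weights $e^{\mu w}$; this is the same cancellation already exhibited in the passage from the first to the third line of \eqref{eq:bmd-joint-dist-drift-new}.
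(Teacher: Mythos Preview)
Your proposal is correct and matches the paper's treatment: the paper states this corollary with no separate proof, since it is the immediate $n=1$ specialization of \eqref{eq:bmd-joint-dist-drift-new} with the conventions $s_0=0$, $y_0=u$. Your additional self-contained argument via the Markov property at time $s$ is exactly the one-step version of the induction in the proof of \autoref{thm:pre-mdr-joint}, and your remark that the constraint ``$0<s_0$'' in the theorem's hypothesis is only notational (the intended convention being $s_0=0$, consistent with $y_0=u$) is the right observation.
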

We now study the weak convergence of $ B^\mu | \Lambda_{u,v} $ as $ u \to v $. 


%
%
%

\begin{lemma}\label{lem:tight-cond-kolmo}
	For the sequence of conditional processes $ \{ B^\mu | \Lambda_{u,v} , u >v \} $ it holds that
	\begin{equation}\label{eq:lem-kolmo-stat}
	\lim_{ u \downarrow v}
	\mathbb E \left [ |B^\mu(t) - B^\mu(s)|^\alpha \Big| \min_{ 0 \leq z \leq T } B^\mu(z) > v , B^\mu(0)  = u \right]  \leq C |t-s|^{\alpha/2} \qquad  0 \leq s \leq t \leq T.
	\end{equation}
	\begin{proof}
		Having in mind the expression of the joint distribution given in \autoref{thm:pre-mdr-joint}, 
		the expectation in \eqref{eq:lem-kolmo-stat} can be written down as 
		\begin{align*}
			&\mathbb E \left [ |B^\mu(t) - B^\mu(s)|^\alpha \Big| \min_{ 0 \leq z \leq T } B^\mu(z) > v , B^\mu(0)  = u \right] 
			\\
			&= 
			\int_v^\infty \int_v^\infty |x-y|^\alpha 
			\left( 
				e^{- \frac{ (x - u )^2}{2s } } 
				 -
				e^{- \frac{ (2v - x - u )^2}{2s } } 
			\right)		
			\frac{ \mathrm d x}{	\sqrt{2\pi s } }  
			\left( 
			e^{- \frac{ (y - x )^2}{2(t-s) } } 
			-
			e^{- \frac{ (2v - y - x )^2}{2(t-s) } } 
			\right)		
			\frac{ \mathrm d y}{	\sqrt{2\pi (t-s) } }  \times 
			\\
			& \qquad \times 
			\frac{ 
				\displaystyle
				\int_v^\infty 
				\bigg( 
				\frac{ 
					e^{- \frac{ (w - y )^2}{2 (T - t) } } - 
					e^{- \frac{ (2v - w - y )^2}{2 (T - t) } } 
				} 
				{ 
					\sqrt{2\pi (T - t) }
				} 
				\bigg)  e^{\mu w} \,\mathrm d w 
			}{
				\displaystyle
				\int_v^\infty 
				\bigg(
				\frac{ 
					e^{- \frac{ (w - u )^2}{2T } } - 
					e^{- \frac{ (2v - w - u )^2}{2 T } } 
				}{
					\sqrt{2\pi T} 
				}
				\bigg)  e^{\mu w} \,\mathrm d w  
			}  .
		\end{align*}
		By taking the limit for $ u\to v  $ and,  by a dominated convergence argument, pushing the limit under the integral sign
		we have that 
		\begin{align*}
		&
		\lim_{ u \downarrow v}
		\mathbb E \left [ |B^\mu(t) - B^\mu(s)|^\alpha \Big| \min_{ 0 \leq z \leq T } B^\mu(z) > v , B^\mu(0)  = u \right]
		\numberthis 
		\\
		&= 
		\int_v^\infty \int_v^\infty |x-y|^\alpha 
		\frac{x-v}{s} 
		e^{- \frac{ (x - v )^2}{2s } } 
		\frac{ \mathrm d x}{	\sqrt{2\pi s } }  
		\left( 
		e^{- \frac{ (y - x )^2}{2(t-s) } } 
		-
		e^{- \frac{ (2v - y - x )^2}{2(t-s) } } 
		\right)		
		\frac{ \mathrm d y}{	\sqrt{2\pi (t-s) } }  \times 
		\\
		& \qquad \times 
		\frac{ 
			\displaystyle
			\int_v^\infty 
			\bigg( 
			\frac{ 
				e^{- \frac{ (w - y )^2}{2 (T - t) } } - 
				e^{- \frac{ (2v - w - y )^2}{2 (T - t) } } 
			} 
			{ 
				\sqrt{2\pi (T - t) }
			} 
			\bigg)  e^{\mu w} \,\mathrm d w 
		}{
			\displaystyle
			\int_v^\infty 
			\frac{w - v}{T}
			e^{- \frac{ (w - v )^2}{2T } }
			\frac{ e^{\mu w} }{
				\sqrt{2\pi T} }
			  \,\mathrm d w  
		}    
	\\
	&= C_{\mu, T}
	\int_0^\infty \int_0^\infty |x-y|^\alpha 
	\frac{x}{s} 
	e^{- \frac{x^2}{2s } } 
	\frac{ \mathrm d x}{	\sqrt{2\pi s } }  
	\left( 
	e^{- \frac{ (y - x )^2}{2(t-s) } } 
	-
	e^{- \frac{ (x + y )^2}{2(t-s) } } 
	\right)		
	\frac{ \mathrm d y}{	\sqrt{2\pi (t-s) } }  \times 
	\\
	& \qquad \times 
		\displaystyle
		\int_0^\infty 
		\bigg( 
		\frac{ 
			e^{- \frac{ (w - y )^2}{2 (T - t) } } - 
			e^{- \frac{ ( w + y )^2}{2 (T - t) } } 
		} 
		{ 
			\sqrt{2\pi (T - t) }
		} 
		\bigg)  e^{\mu w} \,\mathrm d w 
	\\
	&\leq
	C'_{\mu, T}
	\int_0^\infty \int_0^\infty |x-y|^\alpha 
	\frac{x}{s} 
	e^{- \frac{x^2}{2s } } 
	\frac{ \mathrm d x}{	\sqrt{2\pi s } }  
	e^{- \frac{ (y - x )^2}{2(t-s) } } 
	\frac{ \mathrm d y}{	\sqrt{2\pi (t-s) } }   
	\\
	\\
	&\leq
	C'_{\mu, T}
	\int_{0}^\infty \int_{-\infty}^\infty |x-y|^\alpha 
	\frac{x}{s} 
	e^{- \frac{x^2}{2s } } 
	\frac{ \mathrm d x}{	\sqrt{2\pi s } }  
	e^{- \frac{ (y - x )^2}{2(t-s) } } 
	\frac{ \mathrm d y}{	\sqrt{2\pi (t-s) } }  
	\\
	&=
	C'_{\mu, T}  \int_{0}^\infty 	\frac{x}{s} 
	e^{- \frac{x^2}{2s } }  \frac{ \mathrm d x}{	\sqrt{2\pi s } }  
	\int_{-\infty}^\infty |y|^\alpha 
	e^{- \frac{ y^2}{2(t-s) } } 
	\frac{ \mathrm d y}{	\sqrt{2\pi (t-s) } }  
	\\
	& \leq 
	C''_{\mu, T} |t-s|^{\alpha / 2}\, .
		\end{align*}
	\end{proof}
\end{lemma}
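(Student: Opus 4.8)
The natural route is via the explicit finite–dimensional law of \autoref{thm:pre-mdr-joint}. Specialising \eqref{eq:bmd-joint-dist-drift-new} to $n=2$, with evaluation times $s<t$ and horizon $T$, the conditional density of $(B^\mu(s),B^\mu(t))$ under $\Lambda_{u,v}$ at $(x,y)\in(v,\infty)^2$ is
\[
q_s(u,x)\,q_{t-s}(x,y)\,\frac{\displaystyle\int_v^\infty q_{T-t}(y,w)\,e^{\mu w}\,\mathrm dw}{\displaystyle\int_v^\infty q_{T}(u,w)\,e^{\mu w}\,\mathrm dw},\qquad q_\tau(a,b):=\frac{1}{\sqrt{2\pi\tau}}\Bigl(e^{-\frac{(b-a)^2}{2\tau}}-e^{-\frac{(2v-a-b)^2}{2\tau}}\Bigr),
\]
since the drift exponentials $e^{-\mu^2\tau/2+\mu(\cdot)}$ appearing in \eqref{eq:bm-drift-recall-new} telescope over the product of transition densities and cancel against those in the denominator, leaving only the reduced kernels $q_\tau$ and a single $e^{\mu w}$ inside the survival integrals. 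Accordingly, $\mathbb E\bigl[\,|B^\mu(t)-B^\mu(s)|^\alpha\mid\Lambda_{u,v}\bigr]$ is the integral of $|x-y|^\alpha$ against this density over $(v,\infty)^2$.

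Next I would pass to the limit $u\downarrow v$. Writing $u=v+\varepsilon$, each kernel $q_\tau(v+\varepsilon,\cdot)$ vanishes linearly in $\varepsilon$ because its two Gaussians coalesce: explicitly $q_s(v+\varepsilon,x)=\varepsilon\,\tfrac{2(x-v)}{s}e^{-(x-v)^2/2s}/\sqrt{2\pi s}+o(\varepsilon)$ and $\int_v^\infty q_T(v+\varepsilon,w)e^{\mu w}\,\mathrm dw=\varepsilon\,\tfrac2T\int_v^\infty(w-v)e^{-(w-v)^2/2T}e^{\mu w}\,\mathrm dw/\sqrt{2\pi T}+o(\varepsilon)$, so the factors $\varepsilon$ cancel and the integrand converges pointwise. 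The genuinely delicate point is to justify bringing the limit inside the double integral: using $1-e^{-z}\le z$ one obtains $q_s(v+\varepsilon,x)/\varepsilon\le C(1+|x-v|)e^{-c(x-v)^2}$ uniformly for small $\varepsilon$, together with a uniform lower bound $\int_v^\infty q_T(v+\varepsilon,w)e^{\mu w}\,\mathrm dw\ge c'\varepsilon$; combined with $|x-y|^\alpha q_{t-s}(x,y)\le|x-y|^\alpha e^{-(x-y)^2/2(t-s)}/\sqrt{2\pi(t-s)}$ and the boundedness of the survival factor, this produces an $\varepsilon$–free integrable dominating function and dominated convergence applies. After the limit I translate $x,y,w\mapsto x-v,y-v,w-v$ and collect the $v$– and $\mu$–dependent constants (in particular the now–constant denominator) into a single $C_{\mu,T}$, obtaining
\[
\lim_{u\downarrow v}\mathbb E\bigl[\,|B^\mu(t)-B^\mu(s)|^\alpha\mid\Lambda_{u,v}\bigr]=C_{\mu,T}\!\int_0^\infty\!\!\int_0^\infty|x-y|^\alpha\,\frac{x}{s}e^{-\frac{x^2}{2s}}\frac{\mathrm dx}{\sqrt{2\pi s}}\,\Bigl(e^{-\frac{(y-x)^2}{2(t-s)}}-e^{-\frac{(x+y)^2}{2(t-s)}}\Bigr)\frac{\mathrm dy}{\sqrt{2\pi(t-s)}}\;\Xi(y),
\]
with $\Xi(y)=\int_0^\infty\bigl(e^{-(w-y)^2/2(T-t)}-e^{-(w+y)^2/2(T-t)}\bigr)e^{\mu w}\,\mathrm dw/\sqrt{2\pi(T-t)}$.

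It remains to bound the right–hand side by $C|t-s|^{\alpha/2}$. Since $(2v-a-b)^2\ge(b-a)^2$ for $a,b\ge v$, each $q_\tau$ is nonnegative and dominated by the plain heat kernel; in particular $e^{-(y-x)^2/2(t-s)}-e^{-(x+y)^2/2(t-s)}\le e^{-(y-x)^2/2(t-s)}$, and $\Xi(y)$, the un–normalised survival probability of an absorbing drifted Brownian motion on $[0,T-t]$, contributes (after completing the square to absorb $e^{\mu w}$) only a multiplicative constant $C_{\mu,T}$. Absorbing everything into a single $C'_{\mu,T}$, enlarging the $y$–range to all of $\mathbb R$ and translating $y\mapsto y+x$ decouples the integral: the $x$–integral is a finite constant while the $y$–integral is the $\alpha$–th absolute moment of a centred Gaussian of variance $t-s$, equal to $c_\alpha(t-s)^{\alpha/2}$. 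This yields \eqref{eq:lem-kolmo-stat}. The only real obstacle is the dominated–convergence step that interchanges $u\downarrow v$ with the double integral; everything downstream is elementary Gaussian bookkeeping.
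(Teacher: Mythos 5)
Your argument is correct and follows essentially the same route as the paper's own proof: the $n=2$ specialisation of \autoref{thm:pre-mdr-joint} with the drift exponentials telescoped into the two survival integrals, dominated convergence as $u\downarrow v$, domination of the reflected kernels by the free heat kernel with the survival factor absorbed into a constant, and decoupling of the Gaussian integrals to extract the factor $|t-s|^{\alpha/2}$. You in fact give more detail than the paper on the one step it merely asserts, namely the $\varepsilon$-free dominating function justifying the interchange of limit and integral.
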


\begin{remark}
	This result shows that, as a consequence of the Kolmogorov-\v Centsov theorem, the drifted Brownian meander, i.e. the weak limit of the sequence $ \{ B^\mu | \Lambda_{u,v} , u >v \} $, is \emph{a.s.} locally H\"older continuous of parameter $\gamma \in (0, 1/2)$. The Brownian meander thus inherits the same H\"older exponent as
	the underlying  unconditional Brownian motion, as could be expected. 
\end{remark}

We are now able to prove the tightness of the family of measures induced by the drifted 
Brownian meander. 

\begin{theorem}\label{thm:tight-bmd}
	The family of measures $ \{\mu_{u,v} \}_u $ on $ (C[0,T], \mathscr C) $ indexed by $ u$, given by
	\[
	\mu_{u,v}(A) = P\Big(B^\mu \in A \Big | \min_{0\leq z\leq T} B^\mu(z) > v, B^\mu(0) = u\Big ) \qquad A \in \mathscr C, \quad u>v
	\]
	is tight. 
\end{theorem}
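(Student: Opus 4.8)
The plan is to verify the two conditions of \autoref{thm:tight} for the family $\{\mu_{u,v}\}_{u>v}$, using \autoref{pr:kolmo-cen-tight} as the bridge. First I would dispatch condition $(i)$, the tightness of the initial distributions. Under $\mu_{u,v}$ we have $B^\mu(0) = u$ almost surely, and since we are interested in the limit $u \downarrow v$ with $v>0$ fixed, all the relevant measures have their starting point in a bounded neighbourhood of $v$; hence for $a > v$ (say $a = v+1$) and all $u$ in the range under consideration $\mu_{u,v}(\omega : |\omega(0)| > a) = 0$. This also verifies condition $(i)$ of \autoref{pr:kolmo-cen-tight}, since $\sup_{u} \mathbb E|B^\mu(0)|^\nu = \sup_u |u|^\nu < \infty$ on any bounded set of starting points.

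The substance is condition $(ii)$, which I would obtain from condition $(ii)$ of \autoref{pr:kolmo-cen-tight}: I need a bound of the form $\mathbb E[|B^\mu(t) - B^\mu(s)|^\alpha \mid \Lambda_{u,v}] \leq C_T |t-s|^{1+\beta}$ holding uniformly in $u$. Here is where \autoref{lem:tight-cond-kolmo} does the heavy lifting: it gives exactly $\lim_{u \downarrow v} \mathbb E[|B^\mu(t) - B^\mu(s)|^\alpha \mid \Lambda_{u,v}] \leq C|t-s|^{\alpha/2}$, so taking $\alpha > 2$ yields $1 + \beta = \alpha/2 > 1$, which is the required Kolmogorov--\v Centsov exponent. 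The one gap to fill is that \autoref{lem:tight-cond-kolmo} controls only the limit as $u \downarrow v$, whereas \autoref{pr:kolmo-cen-tight} wants a supremum over the whole family. I would close this either by noting that the map $u \mapsto \mathbb E[|B^\mu(t)-B^\mu(s)|^\alpha \mid \Lambda_{u,v}]$ is continuous on $(v,\infty)$ (clear from the explicit joint density in \autoref{thm:pre-mdr-joint} together with dominated convergence) and bounded near $u = v$ by the lemma, hence bounded on any set $\{v < u \leq u_0\}$; or, more cleanly, by running the same chain of estimates as in the proof of \autoref{lem:tight-cond-kolmo} but keeping $u$ generic and bounding the $u$-dependent factors uniformly for $u$ in a bounded interval — the absorbing-barrier density \eqref{eq:bm-drift-recall-new} and the ratio of normalizing integrals in \eqref{eq:bmd-joint-dist-drift-new} are all uniformly controlled there. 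Since tightness is only needed to pass to the limit $u \downarrow v$, it suffices to work with the tail family $\{\mu_{u,v} : v < u \leq u_0\}$ for any fixed $u_0$, and this is harmless.

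With conditions $(i)$ and $(ii)$ of \autoref{pr:kolmo-cen-tight} in hand, that proposition yields tightness of $\{\mu_{u,v}\}$ directly; alternatively, as the excerpt remarks right after \autoref{pr:kolmo-cen-tight}, these conditions imply $(i)$ and $(ii)$ of \autoref{thm:tight}, which is the definition of tightness on $C[0,T]$. Either way the theorem follows.

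I expect the main obstacle to be purely bookkeeping: making precise the passage from the ``limit as $u\downarrow v$'' bound of \autoref{lem:tight-cond-kolmo} to a genuine supremum bound over a family of measures, and checking that the constant $C_T$ can be taken independent of $s,t \in [0,T]$ (which it can, since the final estimate in the proof of \autoref{lem:tight-cond-kolmo} only used $|t-s| \leq T$ and Gaussian moment bounds). No new idea is required beyond what \autoref{lem:tight-cond-kolmo} already provides; the theorem is essentially its corollary.
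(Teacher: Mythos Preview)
Your proposal is correct and follows exactly the paper's route: invoke \autoref{pr:kolmo-cen-tight}, dispatch condition $(i)$ by the determinism of the starting point, and appeal to \autoref{lem:tight-cond-kolmo} for condition $(ii)$. In fact you are more scrupulous than the paper, which simply cites the lemma without commenting on the passage from the limit bound to a uniform bound; your remarks on closing that gap via continuity in $u$ or by rerunning the estimate for generic $u$ are apt, and the paper leaves precisely that bookkeeping implicit.
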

\begin{proof}
	The result follows as an application of Proposition \autoref{pr:kolmo-cen-tight}. 
	The proof that the  sequence of measures $ \{ \mu_{u,v}\}_{u > v} $ satisfies condition 
	$ (ii) $ is given in 
	\autoref{lem:tight-cond-kolmo}. Condition $ (i) $ clearly holds since
	$ P(B^\mu(0) = u) = 1 $ for $ u > v  $. 
	
\end{proof}

\begin{lemma}\label{lem:fin-dim}
	The finite dimensional distributions of $ B^\mu | \Lambda_{u,v} $ converge 
	to those of $ B^\mu | \Lambda_{v} = B^\mu| \{\inf_{ 0 < z < T } B^\mu(z) > v, B^\mu(0) = v \} $
	as $ u \downarrow v $. 
\end{lemma}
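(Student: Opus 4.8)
The plan is to pass to the limit $u\downarrow v$ directly in the closed-form finite-dimensional distributions of \autoref{thm:pre-mdr-joint}, working with the third representation in \eqref{eq:bmd-joint-dist-drift-new}, in which the drift has been pushed into the ratio of the two $w$-integrals. In that expression the starting point $u$ occurs only in two places: through the $j=1$ factor of the product (via $y_0=u$) and through the denominator $\int_v^\infty\big(e^{-(w-u)^2/(2t)}-e^{-(2v-w-u)^2/(2t)}\big)e^{\mu w}\,\mathrm d w/\sqrt{2\pi t}$. As $u\downarrow v$ the two exponentials in each of these coalesce, so both the $j=1$ factor and the denominator vanish; the limit is thus of the indeterminate form $0/0$ and must be resolved by isolating the rate at which each side goes to zero.

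First I would divide numerator and denominator by $(u-v)$. For the $j=1$ factor a first-order Taylor expansion around $u=v$ shows that $\frac{1}{u-v}\big(e^{-(y_1-u)^2/(2s_1)}-e^{-(2v-y_1-u)^2/(2s_1)}\big)$ tends, as $u\downarrow v$, to $\frac{2(y_1-v)}{s_1}e^{-(y_1-v)^2/(2s_1)}$, while the remaining factors $\prod_{j=2}^n[\,\cdot\,]\,\mathrm d y_j$ and the numerator's $w$-integral over $(s_n,t)$ do not involve $u$ and converge trivially. For the denominator I would write, for $u\in(v,v+1]$,
\[
\frac{1}{u-v}\Big(e^{-\frac{(w-u)^2}{2t}}-e^{-\frac{(2v-w-u)^2}{2t}}\Big)=\frac{1}{u-v}\int_v^u\partial_\xi\Big(e^{-\frac{(w-\xi)^2}{2t}}-e^{-\frac{(w+\xi-2v)^2}{2t}}\Big)\,\mathrm d\xi ,
\]
bound the $\xi$-integrand uniformly in $u,\xi$ by a function of $w$ that is integrable against $e^{\mu w}\,\mathrm d w$ on $(v,\infty)$ — the Gaussian factors decay faster than $e^{\mu w}$ grows, uniformly for $\xi\in[v,v+1]$ — and invoke dominated convergence, exactly as in the proof of \autoref{lem:tight-cond-kolmo}, to get that $\frac{1}{u-v}\int_v^\infty\big(e^{-(w-u)^2/(2t)}-e^{-(2v-w-u)^2/(2t)}\big)\frac{e^{\mu w}}{\sqrt{2\pi t}}\,\mathrm d w\to\int_v^\infty\frac{2(w-v)}{t}e^{-(w-v)^2/(2t)}\frac{e^{\mu w}}{\sqrt{2\pi t}}\,\mathrm d w$. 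Dividing the two limits, the factors $(u-v)$ and $2$ cancel and one is left precisely with the right-hand side of \eqref{eq:mdr-0-dist-intro} when $n=1$ and $s_n=s$, and with its $n$-dimensional analogue in general.

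To finish I would check that the limiting family of $n$-dimensional densities is consistent under marginalization and integrates to $1$, so that it is indeed the family of finite-dimensional distributions of a genuine process, namely $B^\mu|\Lambda_v$ in the sense of the Preliminaries. The normalization follows either from a direct Gaussian computation or, since $\{\mu_{u,v}\}_{u>v}$ is already known to be tight by \autoref{thm:tight-bmd}, by excluding escape of mass along convergent subsequences. Pointwise convergence of the densities together with the limit being a probability density then upgrades to convergence of the finite-dimensional distributions (in fact in total variation, by Scheffé's lemma). The main obstacle is the $0/0$ cancellation — dealt with by extracting the common vanishing factor $(u-v)$ from numerator and denominator — together with the uniform domination needed to move the limit under the $w$-integral in the denominator; the rest is bookkeeping.
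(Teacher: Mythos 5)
Your proposal is correct and follows essentially the same route as the paper: the authors likewise observe that only the $j=1$ factor and the normalizing denominator in \eqref{eq:bmd-joint-dist-drift-new} depend on $u$, and resolve the resulting $0/0$ limit by the first-order (de l'H\^opital/Taylor) cancellation of the common factor $(u-v)$, arriving at \eqref{eq:bmd-drift-dist-prooved}. Your extra justifications (dominated convergence for the $w$-integral, consistency and normalization of the limit densities) are details the paper leaves implicit but do not change the argument.
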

\begin{proof}
	As a consequence of Markovianity the terms for $ j=2, 3, \ldots n $ in the product in formula \eqref{eq:bmd-joint-dist-drift-new} do not depend on the 
	starting point $ u $. Thus it suffices to  compute the following limit
	\begin{align*}
		&
		P \bigg\{  
		B^\mu ( s) \in \mathrm d y \bigg |  \inf_{ 0 < z < t } B^\mu( z)  > v, B^\mu(0) = v  
		\bigg \} 
		\numberthis \label{eq:bmd-drift-dist-prooved}
		\\
		&=
		\lim_{ u \downarrow v} 
		P \bigg\{  
		B^\mu ( s) \in \mathrm d y \bigg |  \min_{ 0 \leq z \leq t } B^\mu( z)  > v, B^\mu(0) = u  
		\bigg \} 
		\\
		&=
		\frac{
			\frac{y - v}{\sqrt{2 \pi s^3} } e ^ {  -  \frac{(y - v - \mu s)^2}{2s} }
			P \Big\{  \min_{ s \leq z \leq t} B^\mu(z) > v \Big | B^\mu(s) = y  \Big\}      	
		}{
			\int_v^\infty \frac{w - v}{t}
			\frac{
			e^{  - \frac{ (w-y)^2}{2t} }	
		}{\sqrt{2 \pi t}}
		e^{  - \frac{\mu^2 t}{2} + \mu(w -v)  } 
		\, \mathrm d w
 		}
 	\,\mathrm d y
 	\\
	&=
\mathrm d y \left(\frac ts \right)^{\frac 32} 
\frac{
	(y - v) e^{  -\frac{(y-v)^2}{2s}  }
}{
	\int_0^{\infty} w  e^{  - \frac { w^2 } { 2t } + \mu w } \,\mathrm d w 
}
\int_0^\infty  \left( e^{ - \frac{(w-(y-v) )^2}{2 ( t-s)}}  - e^{ - \frac{(w + (y - v ) )^2}{2 ( t-s)}}  \right)
\frac{e^{\mu w }}{\sqrt{2 \pi( t-s)}} \, \mathrm d w  .
	\end{align*}

Consequently the finite dimensional distributions of the limiting process can be written 
down explicitly as
\begin{align*}
&P \bigg\{ \bigcap_{j=1}^n \left( B^\mu(s_j) \in \mathrm d y_j \right) \,\Big \vert \inf_{0< z < t} B^\mu(z)> v , B^\mu(0)=v \bigg \}	
\\
&=
\mathrm d y_1 \left(\frac t{s_1} \right)^{\frac 32} 
\frac{
	(y_1 - v) e^{  -\frac{(y_1-v)^2}{2s_1}  }
}{
	\int_0^{\infty} w  e^{  - \frac { w^2 } { 2t } + \mu w }  \,\mathrm d w
}
\prod_{j=2}^{n} \Bigg[
\frac{
	e^{- \frac{ (y_j - y_{j-1} )^2}{2 (s_j - s_{j-1}) } } 
}{ %
	\sqrt{2\pi (s_j - s_{j-1}) } } - 
\frac{
	e^{- \frac{ (2v - y_j - y_{j-1} )^2}{2 (s_j - s_{j-1}) } } 
}{ %
	\sqrt{2\pi (s_j - s_{j-1}) } }	\Bigg] \mathrm d y_j 
\\
& \qquad \times 
\int_0^\infty  \left( e^{ - \frac{(w-(y_n-v) )^2}{2 ( t-s_n)}}  - e^{ - \frac{(w + (y_n - v ) )^2}{2 ( t-s_n)}}  \right)
\frac{e^{\mu w }}{\sqrt{2 \pi( t-s_n)}} \, \mathrm d w  . 
\end{align*}

with $ y_i > v, i=1, \ldots,n $,  $ 0< s_1 < \ldots < s_j < \ldots < s_n < t $.

\end{proof}

\begin{theorem}\label{thm:weak-conv-mdr}
	The following weak limit holds:
	\begin{equation}\label{eq:weak-conv-mdr}
	B^\mu \Big |\Big \{  \min_{ 0 \leq z \leq t } B^\mu > v, B^\mu(0) = u \Big \} \xRightarrow[u\downarrow v]{}
	B^\mu \Big | \Big \{ \inf_{ 0 < z < t } B^\mu > v, B^\mu(0) = v \Big \}
	\end{equation}
\end{theorem}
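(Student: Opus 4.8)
The plan is to deduce \eqref{eq:weak-conv-mdr} directly from \autoref{thm:weak-conv}, whose two hypotheses have already been secured. First I would invoke \autoref{thm:tight-bmd}: the family $\{\mu_{u,v}\}_{u>v}$ is tight on $(C[0,T],\mathscr C)$. Since $C[0,T]$ with the supremum metric is separable and complete, Prohorov's theorem turns tightness into relative compactness, so along any sequence $u_k\downarrow v$ there is a subsequence for which $\mu_{u_k,v}$ converges weakly to some probability measure on $(C[0,T],\mathscr C)$; in particular any such subsequential limit is again supported on continuous paths.

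Second, I would bring in \autoref{lem:fin-dim}: for every $m$ and every $0<s_1<\dots<s_m<t$ the finite-dimensional laws of $B^\mu|\Lambda_{u,v}$ converge, as $u\downarrow v$, to the explicit expressions displayed in that lemma. One small point to record here is that the limiting finite-dimensional laws form a consistent family in the Kolmogorov sense — marginalising over an intermediate coordinate reproduces the lower-dimensional law. This is automatic, being the pointwise limit of the consistent family attached to $B^\mu|\Lambda_{u,v}$, but it can also be checked directly by a Chapman--Kolmogorov computation with the absorbed drifted kernel \eqref{eq:bm-drift-recall-new}. Hence, by Kolmogorov's extension theorem together with the fact that the subsequential limits above live on $C[0,T]$, these finite-dimensional laws are realised by a process with continuous paths, whose law is exactly the measure $\mu_v$ induced by $B^\mu|\Lambda_v$ with $\Lambda_v=\{\omega:\inf_{0<z<t}\omega(z)>v,\ \omega(0)=v\}$; that $B^\mu|\Lambda_v$ is well defined and Markov is guaranteed by \autoref{lem:markov-cond}, writing $\Lambda_v$ as the intersection of a constraint on $[0,s]$ and one on $[s,t]$.

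Third, I would combine the two inputs. Because the cylinder (finite-dimensional) sets form a convergence-determining class on $C[0,T]$, every weakly convergent subsequence of $\{\mu_{u,v}\}$ is forced by \autoref{lem:fin-dim} to have the \emph{same} limit $\mu_v$; together with the relative compactness from tightness, this yields convergence of the whole family, $\mu_{u,v}\Rightarrow\mu_v$ as $u\downarrow v$ — which is precisely \eqref{eq:weak-conv-mdr}. Equivalently, this is just the conclusion of \autoref{thm:weak-conv} applied with $X^{(n)}=B^\mu|\Lambda_{u_n,v}$ for $u_n\downarrow v$ and $X=B^\mu|\Lambda_v$, since hypothesis (f.d.d.\ convergence) is \autoref{lem:fin-dim} and hypothesis (tightness) is \autoref{thm:tight-bmd}.

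There is no serious obstacle remaining, the two substantive ingredients already being in hand; the only step that needs a careful word is the identification of the limit — making explicit that the finite-dimensional laws of \autoref{lem:fin-dim} genuinely define a continuous process and that this process coincides \emph{in law} with the conditional process $B^\mu|\Lambda_v$, not merely with ``some process having those marginals.'' This is exactly where one uses that $C[0,T]$ is Polish (so Prohorov applies and subsequential limits remain on $C[0,T]$) and that finite-dimensional sets are convergence-determining, so that convergence of marginals plus tightness upgrades to full weak convergence in the sense of \autoref{thm:weak-conv}.
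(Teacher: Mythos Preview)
Your proof is correct and follows exactly the paper's route, which in one line cites \autoref{lem:fin-dim} (f.d.d.\ convergence) and \autoref{thm:tight-bmd} (tightness) and appeals to \autoref{thm:weak-conv}; you have simply unpacked the mechanism (Prohorov, convergence-determining cylinders, uniqueness of subsequential limits) behind that appeal. One small caveat: invoking \autoref{lem:markov-cond} for $\Lambda_v$ is not quite apt since $\mu(\Lambda_v)=0$ --- the limiting object $B^\mu\mid\Lambda_v$ is \emph{defined} as the weak limit you construct rather than as an elementary conditional process --- but this aside plays no role in the argument.
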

\begin{proof}
	In view of \autoref{lem:fin-dim} and \autoref{thm:tight-bmd}, result \eqref{eq:weak-conv-mdr} is an application of \autoref{thm:weak-conv}. 
\end{proof}

We now check that the maximum variation around the starting point for the drifted meander 
tends to zero, in a sense which is made precise in the following corollary.

\begin{corollary} \label{lem:tight-cond-calc}
	The following limit holds 
	\begin{equation}\label{eq:lemma-cond}
	\lim_{\delta \to 0}
	\lim_{u \to v} 
	P \Big( \max_{0\leq z \leq \delta } | B^\mu ( z)  - B^\mu ( 0 ) | < \eta  \, \Big | 
	\min_{ 0\leq z \leq t} B^\mu ( z) > v ,  B^\mu(0) = u \Big) = 1
	\end{equation}
	for all $ \eta >0 $.
\end{corollary}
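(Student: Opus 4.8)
The plan is to deduce \eqref{eq:lemma-cond} from the weak convergence of \autoref{thm:weak-conv-mdr} via the Portmanteau theorem, which reduces the claim to an elementary path-regularity statement for the limiting meander. For a fixed $\delta\in(0,t)$ I would introduce the functional
\[
\Phi_\delta:C[0,t]\to[0,\infty),\qquad \Phi_\delta(\omega)=\max_{0\leq z\leq\delta}|\omega(z)-\omega(0)| ,
\]
and first note that $\Phi_\delta$ is Lipschitz with constant $2$ for the supremum metric, hence continuous, so that
\[
G_{\delta,\eta}:=\{\omega\in C[0,t]:\Phi_\delta(\omega)<\eta\}
\]
is an \emph{open} subset of the path space. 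Writing $\mu_{u,v}$ for the law of $B^\mu\mid\Lambda_{u,v}$ and $\mu_v$ for the law of $B^\mu\mid\Lambda_v$, \autoref{thm:weak-conv-mdr} together with the open-set half of the Portmanteau theorem (see \cite{billingsley2009convergence}) yields, for every $\eta>0$ and every $\delta\in(0,t)$,
\[
\liminf_{u\downarrow v}\mu_{u,v}(G_{\delta,\eta})\ \geq\ \mu_v(G_{\delta,\eta}).
\]

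Next I would examine $\mu_v(G_{\delta,\eta})$ as $\delta\downarrow 0$. The measure $\mu_v$ is carried by $C[0,t]$, hence by continuous paths (indeed, by the Remark following \autoref{lem:tight-cond-kolmo}, by \emph{a.s.} locally Hölder continuous ones); so for $\mu_v$-almost every $\omega$, continuity of $\omega$ at $0$ forces $\Phi_\delta(\omega)\downarrow 0$ as $\delta\downarrow 0$, since $\delta\mapsto\Phi_\delta(\omega)$ is nondecreasing with infimum $\limsup_{z\downarrow 0}|\omega(z)-\omega(0)|=0$. Therefore $\{G_{\delta,\eta}\}_{\delta>0}$ increases, as $\delta\downarrow 0$, to a set of full $\mu_v$-measure, and continuity of the measure from below gives
\[
\lim_{\delta\downarrow 0}\mu_v(G_{\delta,\eta})=1\qquad\text{for every }\eta>0 .
\]

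Finally I would assemble the two displays with the trivial bound $\mu_{u,v}(G_{\delta,\eta})\leq 1$:
\[
1\ \geq\ \limsup_{\delta\downarrow 0}\limsup_{u\downarrow v}\mu_{u,v}(G_{\delta,\eta})\ \geq\ \liminf_{\delta\downarrow 0}\liminf_{u\downarrow v}\mu_{u,v}(G_{\delta,\eta})\ \geq\ \liminf_{\delta\downarrow 0}\mu_v(G_{\delta,\eta})=1,
\]
so the iterated limit in \eqref{eq:lemma-cond} exists and equals $1$, which is the assertion of the corollary. If one wishes the inner limit $\lim_{u\downarrow v}\mu_{u,v}(G_{\delta,\eta})$ itself to exist, it suffices to add that $\partial G_{\delta,\eta}\subset\{\omega:\Phi_\delta(\omega)=\eta\}$ is a $\mu_v$-null set for all but countably many $\eta$, which follows from the explicit marginal law \eqref{eq:mdr-0-dist-intro}; this refinement is not needed for the nested-limit form stated here.

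The main obstacle is the Portmanteau step: one must be sure that $\Phi_\delta$ genuinely cuts out an \emph{open} set — this is exactly where the strict inequality in \eqref{eq:lemma-cond} is used — and that a mere $\liminf$ over $u$ suffices to identify the nested limit, the matching upper bound being the trivial one. A more computational alternative would be to start from the one-dimensional law \eqref{eq:bmd-onedim-dist-drift-new}, condition the meander at time $\delta$, estimate $P(\Phi_\delta\geq\eta\mid\Lambda_{u,v})$ through the absorbing-barrier density \eqref{eq:bm-drift-recall-new}, and then let $u\downarrow v$ followed by $\delta\downarrow 0$; but the weak-convergence route is shorter and reuses the apparatus set up in \autoref{sec:prelim}.
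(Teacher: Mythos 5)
Your argument is correct, and it is genuinely different from the one in the paper. You deduce \eqref{eq:lemma-cond} softly from \autoref{thm:weak-conv-mdr}: the functional $\Phi_\delta(\omega)=\max_{0\leq z\leq\delta}|\omega(z)-\omega(0)|$ is $2$-Lipschitz for the supremum metric, so $G_{\delta,\eta}=\{\Phi_\delta<\eta\}$ is open and the Portmanteau theorem gives $\liminf_{u\downarrow v}\mu_{u,v}(G_{\delta,\eta})\geq\mu_v(G_{\delta,\eta})$; since $\mu_v$ lives on $C[0,t]$ and $\bigcup_{\delta>0}G_{\delta,\eta}=C[0,t]$ by continuity of paths at the origin, continuity from below and the trivial upper bound $1$ squeeze the iterated limit to $1$. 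This is legitimate and non-circular, because the corollary is stated after \autoref{thm:weak-conv-mdr}, whose proof does not use it. The paper instead proceeds by brute force: it writes the conditional probability through the joint law of $B^\mu$, its running maximum and minimum (a reflection series), applies De l'H\^opital's rule to resolve the $0/0$ limit as $u\downarrow v$, and then identifies the surviving $k=0$ term as a Dirac concentration when $\delta\to0$, finishing with an integration by parts that reproduces the normalizing denominator. What each route buys: yours is shorter, avoids the delicate interchange of limits with the theta-like series, and works verbatim for any weakly convergent family on $C[0,T]$; the paper's computation is self-contained at the level of densities, exhibits explicitly which reflection term carries the mass, and serves as the template for the subsequent corollary on the oscillation at an interior point $s$, where the limit is a nontrivial distribution rather than $1$ and the purely topological argument no longer closes the proof by itself. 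Your closing remark on the existence of the inner limit (boundary sets $\{\Phi_\delta=\eta\}$ being $\mu_v$-null for all but countably many $\eta$) is a correct and welcome refinement, though, as you say, not needed for the nested-limit form of the statement.
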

\begin{proof}
	The probability appearing in  \eqref{eq:lemma-cond} can be developed as 
	\begin{align*}
	&
	F(\delta, v, u, \eta; t)  \\
	&=
	\int_{u-\eta}^{u + \eta} 
	\frac{\displaystyle 
		P \Big\{   \min_{\delta \leq z \leq t} B^\mu(z)> v  \, \Big \vert B^\mu (\delta) = y 
		\Big \} 
	}{
		\displaystyle
		P \Big \{  \min_{0\leq z \leq t} B^{\mu}(z)> v \,  \Big \vert  B^{\mu}(0) = u  \Big\}
	}  \times 
	\\
	& 
	\quad \times P \Big( u - \eta < \min_{ 0\leq z \leq \delta} B^\mu ( z) < 
	\max_{0\leq z \leq \delta}  B^\mu (z )  < u + \eta \,\, , 
	\min_{ 0\leq z \leq \delta} B^\mu (z) > v ,  
	B^\mu(\delta) \in \mathrm d y \, \Big| B^\mu(0) = u \Big )
	\numberthis \label{eq:lemma-prob-1}
	\,\, .
	\end{align*}
	For $ u $ decreasing to $ v $ we have that $ u - \eta < v $ and this implies that 
	$ \Big( \min_{ 0\leq z \leq \delta} B(z) > v  \Big ) \subset   \Big( \min_{ 0\leq z \leq \delta} B(z) > u - \eta \Big ) $
	and \eqref{eq:lemma-prob-1} simplifies as 
	\begin{align*}
	F(\delta, u, v, \eta;t) &= 
	\int_{v}^{u + \eta} P\left\{ v < \min_{ 0\leq z \leq \delta} B^\mu ( z )  
	< \max_{0 \leq z <\delta } B^\mu(z) < u + \eta,
	B^\mu(\delta ) \in \mathrm d y | B^\mu(0) = u  \right\} \times \\
	& \qquad \times 
	\frac{
		P\left\{ \min_{ \delta \leq z \leq t} B^\mu( z) > v \Big | B^\mu(\delta ) = y \right\}
	}{
		P\left\{ \min_{ 0 \leq z \leq t} B^\mu( z) > v \Big | B^\mu(0 ) = u \right\}
	}
	\numberthis  \label{eq:lemma-prob-simp}
	\\
	&=
	\int_{v}^{u + \eta } \frac{\mathrm d y}{\sqrt{2 \pi \delta }}
	\sum_{k = - \infty}^{+ \infty}
	\left\{
	e^{  -  \frac{ (y - u -2k(\eta + u - v))^2}{2 \delta}  } 
	- 
	e^{  -  \frac{ (2 v - y - u +2k(\eta + u - v))^2}{2 \delta}  }
	\right\}
	\times 
	\\
	&\qquad \times 
	e^{ - \frac{\mu^2 \delta}{2}  + \mu ( y - u  - 2k(u + \eta - v)) }
	\times \\
	& \qquad \times 
	\frac{
		P\left\{ \min_{ \delta \leq z \leq t} B^\mu( z) > v \Big | B^\mu(\delta ) = y \right\}
	}{
		P\left\{ \min_{ 0 \leq z \leq t} B^\mu( z) > v \Big | B^\mu(0 ) = u \right\} 
	} \,\, .
	\end{align*}
	Since the probability
	\begin{align*}
	P\left\{ \min_{ 0 \leq z \leq t} B^\mu( z) > v \Big | B^\mu(0 ) = u \right\} &=
	\int_v^\infty 
	P\left\{ 
	\min_{ 0 \leq z \leq t} B^\mu( z) > v, B^\mu(t) \in \mathrm d y \Big | B^\mu(0) = u
	\right\} \\
	&=
	\int_{v}^{\infty} 
	\frac{
		e^{ - \frac{ (y - u )^2}  { 2t}}  - e^{ - \frac{(y+u - 2v)^2}{2t} }  
	}{\sqrt{2 \pi t}}
	e^{ - \frac{\mu^2t} {2  } + \mu ( y - u) }  \mathrm d y 
	\end{align*}
	converges to zero as $  u \to v $, we can apply De l'H\^opital rule to \eqref{eq:lemma-prob-simp} for the limit 
	\begin{align*}
	& \lim\limits_{u \downarrow v} F( \delta, v, u, \eta ; t)  
	\numberthis 
	\label{eq:lemma-prob-hop}
	\\
	&=
	\frac{ 
		\int_{v}^{v+\eta} 
		\sum_{k= - \infty}^{+ \infty} 
		\frac{
			e^{  - \frac{  ( y - v - 2k\eta  )^2}{2 \delta}  }
		}{\sqrt{2 \pi \delta }}   \, \frac{(y - v - 2 k \eta )}{\delta} 
		e^{   
			- \frac{\mu^2 \delta }{2} + \mu ( y - v - 2 k \eta) 
		}
		P\left\{ \min_{ \delta \leq z \leq t} B^\mu( z) > v \Big | B^\mu(\delta ) = y \right\}
		\, \mathrm d y 
	}
	{
		\int_v^\infty  (y-v) \frac {e^{ -   \frac{  ( y - v)^2}{2t} }}{t \sqrt { 2 \pi t} }
		e^{   
			- \frac{\mu^2 t}{2} + \mu ( y - v ) 
		}
		\mathrm d y 
	} \,\,.
	\end{align*}
	We treat separately the cases $ k=0 $ and $  k \neq 0 $. 
	The functions 
	\begin{equation}
	q(y, v + 2 k \eta, \delta) = \frac{(y - v-2 k \eta)}{\delta} 
	\frac{e^{ -  \frac{ (y-v - 2 k \eta)^2}{2 \delta }  }}{\sqrt{2 \pi \delta}}
	\end{equation}
	for $  \delta \to 0 $ converge to the Dirac delta functions with poles at $  y  = v  + 2 k \eta $
	which are outside the integration interval for all $  k \neq 0 $. 
	We now consider the term $ k = 0 $. 
	\begin{align*}
	&
	\int_{v }^{v + \eta } e^{ - \frac{(y - v - \mu \delta)^2}{2 \delta}}
	\frac{(y-v)}{\sqrt{2 \pi \delta^3}} \, \mathrm d y 
	\int_{v }^{\infty} 
	\frac{
		e^{  - \frac{(w-y)^2}{2 ( t - \delta) } } - e^{-  \frac{(w + y - 2v)^2}{2(t - \delta)}  }
	}{\sqrt{ 2 \pi ( t- \delta)}} e^{  - \frac{\mu^2 ( t - \delta)}{2} + \mu ( w - y)  } \, \mathrm d w
	\numberthis \label{eq:lemma-prob-eps}
	\\
	&=
	\int_{0}^\eta 
	\frac{
		y e^{    -   \frac{(y-\mu \delta )^2}{2 \delta} }
	}{\sqrt{2 \pi \delta ^ 3}} \,\mathrm d y 
	\int_{ v }^\infty 
	\frac{
		e^{ - \frac{(w - v - y)^2}{2 ( t - \delta)} } 
		- 
		e^{  - \frac{ ( w + v + y - 2 v )^2   }  { 2 (t-\delta)   } }
	}{\sqrt{2 \pi (t - \delta)}} 
	e^{  - \frac{\mu^2 ( t - \delta)}{2} + \mu ( w -v  - y)  } \, \mathrm d w
	\\
	&=
	\int_{0}^\eta 
	\frac{
		y e^{    -   \frac{(y-\mu \delta )^2}{2 \delta} }
	}{\sqrt{2 \pi \delta ^ 3}} \,\mathrm d y 
	\int_{0} ^ \infty 
	\frac{
		e^{ - \frac{(w - y)^2}{2 ( t - \delta)} } 
		- 
		e^{  - \frac{ ( w +  y   )^2   }  { 2 (t-\delta)   } }
	}{\sqrt{2 \pi (t - \delta)}} 
	e^{  - \frac{\mu^2 ( t - \delta)}{2} + \mu ( w - y)  } \, \mathrm d w
	\\
	&=
	\int_{0}^\eta 
	\frac{
		y e^{    -   \frac{y^2}{2 \delta} }
	}{\sqrt{2 \pi \delta ^ 3}} 
	e^{  - \frac{\mu^2  \delta}{2} + \mu y  }
	\,\mathrm d y 
	\int_{0} ^ \infty 
	\frac{
		e^{ - \frac{(w - y)^2}{2 ( t - \delta)} } 
		- 
		e^{  - \frac{ ( w +  y   )^2   }  { 2 (t-\delta)   } }
	}{\sqrt{2 \pi (t - \delta)}} 
	e^{  - \frac{\mu^2 ( t - \delta)}{2} + \mu ( w - y)  } \, \mathrm d w
	\\
	&=
	e^{  - \frac{\mu^2  t}{2}  }
	\int_{0}^\eta 
	\frac{
		y e^{    -   \frac{y^2}{2 \delta} }
	}{\sqrt{2 \pi \delta ^ 3}} 
	\,\mathrm d y 
	\int_{0} ^ \infty 
	\frac{
		e^{ - \frac{(w - y)^2}{2 ( t - \delta)} } 
		- 
		e^{  - \frac{ ( w +  y   )^2   }  { 2 (t-\delta)   } }
	}{\sqrt{2 \pi (t - \delta)}} 
	e^{   \mu w  } \, \mathrm d w \,\, 
	\\
	%
	%
	%
	%
	%
	%
	&=
	e^{ - \frac{\mu^2 t}{2}} 
	\int_0^\eta \frac{ y e^{ - \frac{ y^2}{2 \delta }}}{\sqrt{2 \pi \delta^3}} \, \mathrm d y
	\int_0^\infty 
	\frac{ e^{- \frac{ ( w - y)^2}{2(t-\delta )}} 
		- e^{   - \frac{ (w+y)^2}{2 ( t-\delta)}   } } {\sqrt{2 \pi (t - \delta)}}
	e^{ \mu w } \, \mathrm d w
	\numberthis \label{eq:lemma-prob-even}
	\\
	&=
	\frac{ 
		e^{ - \frac{\mu^2 t}{2}}
	}{2} 
	\int_{- \eta} ^ \eta 
	\frac{ y e^{ - \frac{ y^2}{2 \delta }}}{\sqrt{2 \pi \delta^3}} \, \mathrm d y
	\int_0^\infty 
	\frac{ e^{- \frac{ ( w - y)^2}{2(t-\delta )}} 
		- e^{   - \frac{ (w+y)^2}{2 ( t-\delta)}   } } {\sqrt{2 \pi (t - \delta)}}
	e^{ \mu w } \, \mathrm d w \,\, .
	\end{align*}
	Note that the function 
	\begin{equation*}
	g(y,w ; \delta )  = \frac{ y e^{ - \frac{ y^2}{2 \delta }}}{\sqrt{2 \pi \delta^3}} 
	\int_0^\infty 
	\frac{ e^{- \frac{ ( w - y)^2}{2(t-\delta )}} 
		- e^{   - \frac{ (w+y)^2}{2 ( t-\delta)}   } } {\sqrt{2 \pi (t - \delta)}}
	e^{ \mu w }\, \mathrm d w
	\end{equation*}
	is even with respect to the variable $ y $ and this justifies \eqref{eq:lemma-prob-even}.
	An integration by parts in \eqref{eq:lemma-prob-even} yields
	\begin{align*}
	&
	-\frac{ 
		e^{ - \frac{\mu^2 t}{2}}
	}{2} 
	\frac{ e^{ - \frac{ y^2}{2 \delta }}}{\sqrt{2 \pi \delta}} 
	\int_0^\infty 
	\frac{ e^{- \frac{ ( w - y)^2}{2(t-\delta )}} 
		- e^{   - \frac{ (w+y)^2}{2 ( t-\delta)}   } } {\sqrt{2 \pi (t - \delta)}}
	e^{ \mu w } \, \mathrm d w \Big| _{y = - \eta} ^ { y = \eta} 
	\\
	& \qquad 
	+
	\frac{ 
		e^{ - \frac{\mu^2 t}{2}}
	}{2} 
	\int_{- \eta} ^ \eta 
	\frac{ e^{ - \frac{ y^2}{2 \delta }}}{\sqrt{2 \pi \delta}} \, \mathrm d y
	\int_0^\infty 
	\bigg\{ 
	\frac{w - y }{t - \delta}
	e^{- \frac{ ( w - y)^2}{2(t-\delta )}}
	+ 
	\frac{w + y }{t - \delta}
	e^{   - \frac{ (w+y)^2}{2 ( t-\delta)}   } 
	\bigg \}
	\frac{ e^{ \mu w }} {\sqrt{2 \pi (t - \delta)}}
	\, \mathrm d w
	\end{align*}
	If we let $  \delta \to 0 $, for $  \eta>0 $, by denoting with $ \mu^\delta_x(\cdot) $ a Dirac point mass at $ x $,  we obtain
	\begin{align*}
	\numberthis 
	&  
	\frac{ 
		e^{ - \frac{\mu^2 t}{2}}
	}{2} 
	\int_{- \eta} ^ \eta 
	\mu^\delta_0(y) \, \mathrm d y
	\int_0^\infty 
	\bigg\{ 
	\frac{w - y }{t }
	e^{- \frac{ ( w - y)^2}{2 t}}
	+ 
	\frac{w + y }{t }
	e^{   - \frac{ (w+y)^2}{2  t }   } 
	\bigg \}
	\frac{ e^{ \mu w }} {\sqrt{2 \pi t }}
	\, \mathrm d w
	\\
	&=
	e^{ - \frac{\mu^2 t}{2}}
	\int_0^\infty 
	\frac{w  }{t }
	\frac{ e^{- \frac{ w^2}{2 t}  + \mu w}    } {\sqrt{2 \pi t }}
	\, \mathrm d w \,\, .
	\end{align*}
	This coincides with the denominator of \eqref{eq:lemma-prob-hop}
	and this concludes the proof of \cref{lem:tight-cond-calc}.
\end{proof}

\begin{corollary}
	It holds that
	\begin{align}
	& \lim_{\delta \to 0} \lim_{ u \downarrow v} P \left\{  \max_{s - \delta \leq z \leq s +\delta } |B^\mu (z) | \leq \eta \,\Big | \min_{ 0 \leq z \leq t } B^\mu(z) > v, B^\mu (0) = u\right\} 
	\\
	&= 
	P\left\{ B^\mu (s ) \leq \eta \Big  | \inf_{ 0 < z < t } B^\mu(z) > v, B^\mu(0) = v \right\} \, .
	\notag 
	\end{align}
	\begin{proof}
		\begin{align*}
		&
		P \left\{  \max_{s - \delta \leq z \leq s +\delta } |B^\mu (s) | \leq \eta \,\Big | \min_{ 0 \leq z \leq t } B^\mu(z) > v, B^\mu (0) = u\right\} 
		\numberthis \label{eq:cor-max-ini}
		\\
		&=
		\int_v^{\eta} \int_v^{\eta} 
		P \left\{
		\min_{ 0 \leq z \leq s - \delta  } B^\mu(z) > v, 
		B^\mu(s - \delta) \in \mathrm d w \Big | B^\mu(0) = u \right\}
		\\
		& \quad \times 
		P \left\{
		\min_{ s - \delta \leq z \leq s +  \delta  } B^\mu(z) > v, 
		-\eta < \max_{ s - \delta \leq z \leq s +  \delta  } B^\mu(z) <
		\eta, 
		B^\mu(s + \delta) \in \mathrm d y \Big | B^\mu(s - \delta) = w \right\}
		\\
		& \quad \times 
		P \left\{
		\min_{ s + \delta \leq z \leq t  } B^\mu(z) > v, 
		\Big | B^\mu(s + \delta ) = y \right\}
		\\
		& \quad \times 
		\left(
		\int_v^{\infty}
		P \left\{
		\min_{ 0 \leq z \leq t  } B^\mu(z) > v, 
		B^\mu(t) \in \mathrm d w \Big | B^\mu(0) = u \right\}
		\right)^ { - 1 }
		\end{align*}
		We first perform the limit for $ u \downarrow v $ which involves the following
		two terms of \eqref{eq:cor-max-ini}
		\begin{align*}
		& 
		\lim_{ u \downarrow v} 
		\frac{
			\int_v^{ \eta }
			P \left\{
			\min_{ 0 \leq z \leq s - \delta  } B^\mu(z) > v, 
			B^\mu(s - \delta) \in \mathrm d w \Big | B^\mu(0) = u \right\}
		}{
			\int_v^{\infty}
			P \left\{
			\min_{ 0 \leq z \leq t  } B^\mu(z) > v, 
			B^\mu(t) \in \mathrm d w \Big | B^\mu(0) = u \right\}
		}
		\\
		&=
		\lim_{ u \downarrow v} 
		\frac{
			\int_v^{ \eta }
			\Big( 
			e^{- \frac{ (w- u )^2}{2 (s - \delta )} } -
			e^{- \frac{ (2v - w - u)^2}{2 (s - \delta )} }
			\Big)
			e^{ - \frac{\mu^2 (s - \delta )}{2} + \mu ( w-u) }
			\frac{ \mathrm d w}{\sqrt{2 \pi (s - \delta ) }}
		}{
			\int_v^{\infty} 
			\Big( 
			e^{- \frac{ (w- u )^2}{2 t } } -
			e^{- \frac{ (2v - w - u)^2}{2 t } }
			\Big)
			e^{ - \frac{\mu^2 t}{2} + \mu ( w-u) }
			\frac{ \mathrm d w}{\sqrt{2 \pi t }}
		}
		\\
		&=
		\frac{
			\int_v^{\eta }
			\frac{w - v}{s - \delta }
			\frac{
				e^{  - \frac{ (w-v)^2}{2(s - \delta )} }	
			}{\sqrt{2 \pi t}}
			e^{  - \frac{\mu^2 (s - \delta )}{2} + \mu(w -v)  } 
			\, \mathrm d w
		}
		{
			\int_v^{\infty } 
			\frac{w - v}{t}
			\frac{
				e^{  - \frac{ (w-y)^2}{2t} }	
			}{\sqrt{2 \pi t}}
			e^{  - \frac{\mu^2 t}{2} + \mu(w -v)  } 
			\, \mathrm d w
		}
		\end{align*}
		In order to compute the limit for $ \delta \to 0  $ we first consider the 
		term
		\begin{align*}\label{key}
		&
		\int_v^{\eta }
		P \left\{
		\min_{ s - \delta \leq z \leq s +  \delta  } B^\mu(z) > v, 
		-\eta < \max_{ s - \delta \leq z \leq s +  \delta  } B^\mu(z) <
		\eta, 
		B^\mu(s + \delta) \in \mathrm d y \Big | B^\mu(s - \delta) = w \right\}
		\\
		&=
		\int_v^{\eta }
		P \left\{
		\min_{ 0 \leq z 2 \delta  } B^\mu(z) > v, 
		-\eta < \max_{0 \leq z \leq 2  \delta  } B^\mu(z) <
		\eta, 
		B^\mu(2 \delta) \in \mathrm d y \Big | B^\mu(0) = w \right\}
		\numberthis
		\\
		&=
		\int_v^{\eta }
		\frac{\mathrm d y}{\sqrt{2 \pi t}}
		\sum_{k = - \infty}^{+ \infty}
		\left\{
		e^{  -  \frac{ (y - w -2k(\eta + w - v))^2}{4 \delta}  } 
		- 
		e^{  -  \frac{ (2 v - y - u +2k(\eta + w - v))^2}{4 \delta}  }
		\right\}
		e^{ - \mu^2 \delta  + \mu ( y - u  - 2k(w + \eta - v)) }
		\end{align*}
		where we used the well-known joint distribution of the Brownian motion, 
		its maximum and its minimum (see for example \cite{borodin2012handbook}, p. 271).  When $ \delta  $ tends to zero all the terms in the last summation tend to 
		Dirac's point masses. They all have poles outside the interval of integration 
		except the first term for $ k = 0 $, which has a pole in $ y = w $. 
		Thus, in conclusion, we have that
		\begin{align*}
		& \lim_{\delta \to 0} \lim_{ u \downarrow v} P \left\{  \max_{s - \delta \leq z \leq s +\delta } |B^\mu (s) | \leq \eta \,\Big | \min_{ 0 \leq z \leq t } B^\mu(z) > v, B^\mu (0) = u\right\} 
		\\
		&=
		\int_v^{\eta }
		\frac{w - v}{s}
		\frac{
			e^{  - \frac{ (w-v)^2}{2s} }	
		}{\sqrt{2 \pi t}}
		e^{  - \frac{\mu^2 s}{2} + \mu(w -v)  } 
		P \left\{
		\min_{ s \leq z \leq t  } B^\mu(z) > v, 
		\Big | B^\mu(s ) = w \right\}
		\, \mathrm d w
		\\
		& \quad \times
		\left(
		\int_v^{\infty } 
		\frac{w - v}{t}
		\frac{
			e^{  - \frac{ (w-y)^2}{2t} }	
		}{\sqrt{2 \pi t}}
		e^{  - \frac{\mu^2 t}{2} + \mu(w -v)  } 
		\, \mathrm d w
		\right)^{\!\!-1}. \numberthis 
		\end{align*}
		The last expression corresponds to the univariate distribution of 
		the drifted Brownian meander, whose density is given in \eqref{eq:bmd-drift-dist-prooved}. 
	\end{proof}
\end{corollary}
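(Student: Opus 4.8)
The plan is to reduce the left-hand side to the one–dimensional meander density \eqref{eq:bmd-drift-dist-prooved} in three steps: a Markov decomposition of the path, the limit $u\downarrow v$, and then the limit $\delta\downarrow 0$. First I would apply the Markov property of $B^\mu$ at the two instants $s-\delta$ and $s+\delta$ to express the conditional probability as a ratio whose numerator is
\[
\int_v^{\eta}\!\!\int_v^{\eta} P\Big\{\min_{0\le z\le s-\delta}B^\mu(z)>v,\, B^\mu(s-\delta)\in\mathrm d w\,\Big|\,B^\mu(0)=u\Big\}\; R_\delta(w,\mathrm d y)\; P\Big\{\min_{s+\delta\le z\le t}B^\mu(z)>v\,\Big|\,B^\mu(s+\delta)=y\Big\},
\]
with $R_\delta(w,\mathrm d y)=P\{\min_{0\le z\le 2\delta}B^\mu>v,\,\max_{0\le z\le 2\delta}B^\mu<\eta,\,B^\mu(2\delta)\in\mathrm d y\mid B^\mu(0)=w\}$ the law of the drifted Brownian motion on $[0,2\delta]$ absorbed both at $v$ and at $\eta$, and whose denominator is $P\{\min_{0\le z\le t}B^\mu>v\mid B^\mu(0)=u\}$, exactly as in \autoref{thm:pre-mdr-joint} and \eqref{eq:bm-drift-recall-new}. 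On the conditioning event $|B^\mu|$ coincides with $B^\mu$ (recall $v>0$), which is why $\max|B^\mu|\le\eta$ becomes the two–sided constraint $v<B^\mu<\eta$ on $[s-\delta,s+\delta]$ and forces $w,y\in(v,\eta)$.

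Second, I would let $u\downarrow v$. Numerator and denominator both vanish in this limit (the motion started exactly at the barrier is killed instantly), so I would apply De l'H\^opital's rule in $u$ just as in the proofs of \autoref{lem:fin-dim} and \cref{lem:tight-cond-calc}: differentiating the reflected kernel $e^{-(w-u)^2/2\tau}-e^{-(2v-w-u)^2/2\tau}$ at $u=v$ produces the meander weight $\tfrac{w-v}{\tau}e^{-(w-v)^2/2\tau}$. Since $R_\delta(w,\mathrm d y)$ and the survival factor over $[s+\delta,t]$ do not depend on $u$, a dominated–convergence argument lets me push the limit inside the $\mathrm d w\,\mathrm d y$ integral, and the ratio becomes
\[
\frac{\displaystyle\int_v^{\eta}\frac{w-v}{s-\delta}\,\frac{e^{-(w-v)^2/2(s-\delta)}}{\sqrt{2\pi(s-\delta)}}\,e^{-\frac{\mu^2(s-\delta)}2+\mu(w-v)}\Big(\int_v^{\eta}R_\delta(w,\mathrm d y)\,P\{\min_{s+\delta\le z\le t}B^\mu>v\mid B^\mu(s+\delta)=y\}\Big)\mathrm d w}{\displaystyle\int_v^{\infty}\frac{w-v}{t}\,\frac{e^{-(w-v)^2/2t}}{\sqrt{2\pi t}}\,e^{-\frac{\mu^2 t}2+\mu(w-v)}\,\mathrm d w}.
\]

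Third, I would let $\delta\downarrow 0$. Using the explicit series for the joint law of Brownian motion with its running maximum and minimum (see, e.g., \cite{borodin2012handbook}, p.~271), $R_\delta(w,\cdot)$ is a signed sum of Gaussian kernels in $y$ concentrating, as $\delta\to 0$, at the points $y=w+2k(\eta+w-v)$ and $y=2v-w-2k(\eta+w-v)$, $k\in\mathbb Z$; for $v<w<\eta$ the only one of these lying inside $(v,\eta)$ is the $k=0$ diagonal term, located at $y=w$. Hence $R_\delta(w,\cdot)$ acts in the limit as evaluation at $y=w$ (with total mass $1$), while by path–continuity $P\{\min_{s+\delta\le z\le t}B^\mu>v\mid B^\mu(s+\delta)=y\}\to P\{\min_{s\le z\le t}B^\mu>v\mid B^\mu(s)=w\}$. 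What survives is
\[
\frac{\displaystyle\int_v^{\eta}\frac{w-v}{s}\,\frac{e^{-(w-v)^2/2s}}{\sqrt{2\pi s}}\,e^{-\frac{\mu^2 s}2+\mu(w-v)}\,P\Big\{\min_{s\le z\le t}B^\mu(z)>v\,\Big|\,B^\mu(s)=w\Big\}\,\mathrm d w}{\displaystyle\int_v^{\infty}\frac{w-v}{t}\,\frac{e^{-(w-v)^2/2t}}{\sqrt{2\pi t}}\,e^{-\frac{\mu^2 t}2+\mu(w-v)}\,\mathrm d w},
\]
which by \eqref{eq:bmd-drift-dist-prooved} is precisely $\int_v^{\eta}P\{B^\mu(s)\in\mathrm d w\mid\inf_{0<z<t}B^\mu>v,\,B^\mu(0)=v\}=P\{B^\mu(s)\le\eta\mid\inf_{0<z<t}B^\mu>v,\,B^\mu(0)=v\}$, the asserted right-hand side.

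The step I expect to be delicate is the $\delta\to 0$ passage: one must show that $R_\delta(w,\cdot)$ collapses to the point mass at $w$ uniformly enough to be integrated against the remaining factors, which are bounded and continuous on the relevant compacts, and that neither the reflection term at the upper barrier $\eta$ nor the drift exponentials spoil this. As in \cref{lem:tight-cond-calc}, the clean route is to retain only the $k=0$ term, rescale $y-w$, and integrate by parts in $w$ so that the approximate–identity weight $\tfrac{w-v}{s-\delta}e^{-(w-v)^2/2(s-\delta)}$ is transferred onto smooth integrands before the limit is taken; the remaining terms are discarded because their concentration points stay bounded away from $(v,\eta)$. No interchange–of–limits subtlety arises, since the order $u\to v$ first and $\delta\to 0$ second is exactly the one prescribed in the statement.
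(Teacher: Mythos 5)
Your proposal is correct and follows essentially the same route as the paper: the same Markov decomposition at $s-\delta$ and $s+\delta$, the same De l'H\^opital-type limit in $u$ producing the weight $\frac{w-v}{s-\delta}e^{-(w-v)^2/2(s-\delta)}$, and the same collapse of the two-barrier kernel to the $k=0$ Dirac mass at $y=w$ as $\delta\to 0$. Your closing remarks on the uniformity of the $\delta\to 0$ passage are a reasonable elaboration of what the paper leaves implicit, but the argument is the same.
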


\begin{remark}

Either from the last expression of \eqref{eq:bmd-drift-dist-prooved} or directly from the first 
one we get
\begin{align*}
	&
	P \bigg\{  
	B^\mu ( t) \in \mathrm d y \bigg |  \inf_{ 0 < z < t } B^\mu( z)  > v, B^\mu(0) = v  
	\bigg \} 
	\\
	&=
	\frac{
		(y - v )   e^{  - \frac{ (y-v)^2}{2t} } e^{\mu y } 
	}{
		\int_v^\infty (w-v) e^{  - \frac{ (w-v)^2}{2t}   }  e^{ \mu w} \, \mathrm d w
	} \,\mathrm d y
	\numberthis \label{eq:bmd-drift-dist-t-prooved} \quad , \qquad y > v\,\, .
\end{align*}

For $  \mu = 0  $ we retrieve from \eqref{eq:bmd-drift-dist-t-prooved} the truncated Rayleigh distribution 
\[
P \bigg\{  
B ( t) \in \mathrm d y \bigg |  \inf_{ 0 < z < t } B( z)  > v, B(0) = v  
\bigg \} = 
\frac{y - v}{t} e^{  - \frac{(y-v)^2}{2t}  } \,\mathrm d y\,\, , \qquad y > v \,\, .
\]
	
\end{remark}


%
\section{About the maximum of the Brownian meander}
In this section we study the distribution of the maximum of the drifted Brownian meander and then we use the results about the maximum to derive the distribution of the first passage time.
\subsection{Maximum.}
As far as the maximum is concerned we have that for $ s<t, x>v, x>u $, 
\begin{align*}
&
P\bigg\{ \max_{0 \leq z \leq s} B^\mu(z) <x \Big |  \min_{0 \leq z \leq t} B^\mu (z) > v, B^\mu(0) = u  \bigg\}
\numberthis 
\label{eq:max-bmd-ini}
\\
&=
\int_v^x 
P \bigg\{  v < \min_{0 \leq z \leq s} B^\mu(z) < \max_{0 \leq z \leq s} B^\mu(z) < x , 
	B^\mu(s) \in  \mathrm d y
	\Big | 
	B^\mu(0) = u
	 \bigg\} \times 
 	\\
 	& \qquad 
 	\times 
 	\frac{
 		P\Big\{   \min_{s \leq z \leq t} B^\mu(z)   > v  \Big | B^\mu ( s) = y \Big\}
 	}{
 		P\Big\{   \min_{0 \leq z \leq t} B^\mu(z)   > v  \Big | B^\mu ( 0) = u \Big\}
	}
	\\
	&=
	\int_v^x \, \frac{\mathrm d y}{\sqrt{2 \pi s}} \sum_{k = - \infty}^{+\infty}
	\left[ 
	e^{    -   \frac{(y - u - 2k(x-v))^2}{2s} } 
	- 
	e^{    -   \frac{(2v - y - u +2k(x-v))^2}{2s} }  
	\right ] 
	e^{ - \frac{\mu^2s}{2} + \mu ( y - u - 2k(x-v))} 
	\,\,\times 
	\\
	& \qquad \times 
	\frac{
		P\Big\{   \min_{s \leq z \leq t} B^\mu(z)   > v  \Big | B^\mu ( s) = y \Big\}
	}{
		\int_v^\infty 
		\left[ 
		e^{  - \frac{(y-u)^2}{2t}  } 
		- 
		e^{  - \frac{(2v - y-u)^2}{2t}  } 
		\right]
		e^{  - \frac{\mu^2 t}{2} + \mu ( y - u)   } \, \frac{ \mathrm d y}{\sqrt{2 \pi t} } 
	} \, \, .
\end{align*}
A certain simplification is obtained by letting $  s \to t $ and formula 
\eqref{eq:max-bmd-ini}  becomes
\begin{align*}
	&
	P\bigg\{ \max_{0 \leq z \leq t} B^\mu(z) <x \Big |  \min_{0 \leq z \leq t} B^\mu (z) > v, B^\mu(0) = u  \bigg\}
	\numberthis 
	\label{eq:max-bmd-t}
	\\
	&=
	\frac{
		\int_v^x \sum_{k = - \infty}^{+\infty}
		\left[ 
			e^{    -   \frac{(y - u - 2k(x-v))^2}{2t} } 
		- 
			e^{    -   \frac{(2v - y - u +2k(x-v))^2}{2t} }  %
		\right ] 
		e^{ - \frac{\mu^2t}{2} + \mu ( y - u - 2k(x-v))}
		\, \frac{ \mathrm d y}{\sqrt{2 \pi t} }
	}{
		\int_v^\infty 
		\left[ 
			e^{  - \frac{(y-u)^2}{2t}  }  
		- 
			e^{  - \frac{(2v - y-u)^2}{2t}  } 
		\right]
		e^{  - \frac{\mu^2 t}{2} + \mu ( y - u)   } \, \frac{ \mathrm d y}{\sqrt{2 \pi t} }
	} \, \, .
\end{align*}
Formula \eqref{eq:max-bmd-ini} for $  u \downarrow v $ yields
\begin{align*}
	&
	P\bigg\{ \max_{0 \leq z \leq s} B^\mu(z)<x \Big |  \inf_{0 < z < t} B^\mu (z) > v, B^\mu(0) = v  \bigg\}
	\numberthis 
	\label{eq:max-bmd-v}
	\\
	&=
		\int_v^x \sum_{k = - \infty}^{+\infty}
			e^{    -   \frac{(y - v - 2k(x-v))^2}{2 s} } 
		\frac { (y - v - 2k (x-v) )}{s\sqrt{2 \pi s}}
		e^{ - \frac{\mu^2 s}{2} + \mu ( y - v- 2k(x-v))}
		\, \times 
		\\
		& \qquad \times 
		\frac{ 
		P\Big\{   \min_{s \leq z \leq t} B^\mu(z)   > v  \Big | B^\mu ( s) = y \Big\}
		\, 	\mathrm d  y
		}{
		\int_v^\infty 
			e^{  - \frac{(w-v)^2}{2t}  } 
		\frac{ (w-v) }{ t \sqrt{2 \pi t } }
		e^{  - \frac{\mu^2 t}{2} + \mu ( w - v)   } \,  \mathrm d  w
	} 
	\\
	&=
	\int_v^x \sum_{k = - \infty}^{+\infty}
		e^{    -   \frac{(y - v - 2k(x-v))^2}{2 s} } 
	\frac { (y - v - 2k (x-v) )}{s\sqrt{2 \pi s }}
	e^{ -2 k \mu (x-v)}
	\, \times 
	\\
	& \qquad \times 
	\frac{ 
		\int_v^\infty
		\left[
			e^{  - \frac{(w - y )^2}{  2 (t-s) }    }
			- 
			e^{  - \frac{  (w + y - 2 v)^2}{ 2 (t-s) }    } 
			\right]
	e^{ \mu w} \, \frac{ \mathrm d w}{\sqrt{2 \pi (t-s)} }
		\,\mathrm d y
	}{
		\int_v^\infty 
			e^{  - \frac{(w-v)^2}{2t}  } 
		\frac { (w-v) }{t \sqrt{2 \pi t } } 
		e^{ \mu w   } \,  \mathrm d w
	} \, .
\end{align*}
Formula \eqref{eq:max-bmd-t} for $  u \downarrow v $ leads to 
\begin{align*}
		&
		P\bigg\{ \max_{0 \leq z \leq t} B^\mu(z) <x \Big |  \inf_{0 < z < t} B^\mu (z) > v, B^\mu(0) = v  \bigg\}
		\numberthis 
		\label{eq:max-bmd-t-v}
		\\
		&=
		\frac{  
		\sum_{k = -\infty} ^ { + \infty}
		e^{  - 2 \mu k (x-v)  }
		\int_v^x
			e^ {   -  \frac{(y - v - 2k(x-v))^2}{2t}  }
			(y - v - 2k ( x-v) )
			e^{ \mu y } \, \mathrm d y
		}{
		\int_v^\infty 
		e^{  - \frac{(y - v)^2}{2t}  } (y-v) 
		e^{ \mu y } \,\mathrm d y	
		      }
	    \\
	    &=
	    \frac{  
	    	\sum_{k = -\infty} ^ { + \infty}
	    	e^{  - 2 \mu k (x-v)  }
	    	\int_0^{x-v}
	    	e^ {   -  \frac{(w  - 2k(x-v))^2}{2t}  }
	    	(w- 2k ( x-v) )
	    	e^{ \mu w } \, \mathrm d w
	    }{
	    	t[1 + \mu \int_0^\infty 
	    	e^{  - \frac{y^2}{2t}  } 
	    	e^{ \mu y } \,\mathrm d y]	
	    } \, .
\end{align*}
Observe that the passage to the limit fo $ u \downarrow v $ is justified by \autoref*{thm:weak-conv-mdr} and by the continuous mapping theorem as observed
in Section 2.  For $ v=0 $ result \eqref{eq:max-bmd-t-v} further simplifies as 
\begin{align*}
&
P\bigg\{ \max_{0 \leq z \leq t} B^\mu(z) <x \Big |  \inf_{0 < z < t} B^\mu (z) > 0, B^\mu(0) = 0  \bigg\}
\numberthis 
\label{eq:max-bmd-t-v0}
\\
&=
\frac{  
	\sum_{k = -\infty} ^ { + \infty}
	e^{  - 2 \mu k x  }
	\int_0^{x}
	e^ {   -  \frac{(w  - 2kx)^2}{2t}  }
	(w- 2kx )
	e^{ \mu w } \, \mathrm d w
}{
	t[ 1 + \mu \int_0^\infty 
	e^{  - \frac{y^2}{2t}  } 
	e^{ \mu y } \,\mathrm d y	]
}
\\
&=
\frac{  
	\sum_{r = -\infty} ^ { + \infty}
	(-1)^r
	e^{  -  \mu r x - \frac{x^2 r^2}{2t}  } 
	+ \mu
	\sum_{r = -\infty}^{+ \infty}
	\int_{- 2 rx}^{x - 2 r x }
	e^ {   -  \frac{w ^2}{2t}  + \mu w}
 \, \mathrm d w
}{
	1 + \mu \int_0^\infty 
	e^{  - \frac{y^2}{2t}  } 
	e^{ \mu y } \,\mathrm d y	
} \, \, .
\end{align*}
Formula \eqref{eq:max-bmd-t-v0} can be written in a more convenient way by observing that 
\begin{align*}
	\sum_{r=-\infty}^{+ \infty } 
	\int_{- 2 r x } ^ { x - 2 r x } 
	e^ {   -  \frac{w ^2}{2t}  + \mu w}
	\, \mathrm d w 
	&=
	\sum_{  r = 0 } ^ {+ \infty}  
	\int_{ 2 r x } ^ { x + 2 r x } 
	e^ {   -  \frac{w ^2}{2t}  + \mu w}
	\, \mathrm d w 
	+ 
	\sum_{  r = 1 } ^ {+ \infty}  
	\int_{-  2 r x } ^ { x  -  2 r x } 
	e^ {   -  \frac{w ^2}{2t}  + \mu w}
	\, \mathrm d w 
	\\
	&=
	\int_0^\infty e^{ - \frac{w^2}{2t} } g (w, x) \,\mathrm d w 
	\\
\end{align*}
where 
\begin{align*}
g(w, x)
&= e^{ \mu w (-1)^{ \left \lfloor \frac w x \right \rfloor} }
\end{align*}
and $  \left \lfloor  z \right \rfloor  $ denotes the integer part of the real number $ z $.
In conclusion
\begin{align*}
&
P\bigg\{ \max_{0 \leq z \leq t} B^\mu(z) <x \Big |  \inf_{0 < z < t} B^\mu (z) > 0, B^\mu(0) = 0  \bigg\}
\label{eq:max-bmd-t-v0-fin}
\numberthis
\\
&=
\frac{  
	\sum_{r = -\infty} ^ { + \infty}
	(-1)^r
	e^{  -  \mu r x - \frac{x^2 r^2}{2t}  } 
	+ \mu
	\int_0^\infty e^{ - \frac{w^2}{2t} } e^{ \mu w (-1)^{ \left \lfloor \frac w x \right \rfloor} } \,\mathrm d w
}{
	1 + \mu \int_0^\infty 
	e^{  - \frac{y^2}{2t}  } 
	e^{ \mu y } \,\mathrm d y	
} \,\, .
\end{align*}
Observe that  
\begin{enumerate}[(i)]
\item for $ \mu = 0 $ we obtain the well-known distribution of the driftless meander
\begin{equation}\label{eq:max-bmd-mu0}
P\bigg\{ \max_{0 \leq z \leq t} B(z) <x \Big |  \inf_{0 < z < t} B (z) > 0, B(0) = 0  \bigg\}
=
\sum_{r = - \infty} ^ { + \infty} 
(-1)^r
e^{  - \frac{x^2 r^2 }{2t}  } \,\, .
\end{equation}
\item 
For $  x \to \infty  $, $  \mu \neq 0 $, we obtain that  \eqref{eq:max-bmd-t-v0-fin} tends to 1
as can be seen from \eqref{eq:max-bmd-t-v}.
\item 
In order to prove that \eqref{eq:max-bmd-t-v0-fin} for $  x \to 0  $, $  \mu \neq 0 $ 
we write 
\[
\sum_{k=-\infty}^{+ \infty} (-1)^k e^{- \frac{x^2 k^2}{2t} + \mu k x } 
= 
\sum_{k=-\infty}^{+ \infty} (-1)^k e^{- \frac{x^2 k^2}{2t} } 
\left(
1 + \int_0^{\mu k x} e^w \, \mathrm d w 
\right). 
\]
The first term tends to zero because we can apply
\[
\sum_{k=-\infty}^{+ \infty} (-1)^k e^{- \frac{x^2 k^2}{2t} } 
= 
\frac{\sqrt{4 \pi t}}{x} 
\sum_{k=1}^{+ \infty} e^{- \frac{ (2k-1)^2 \pi^2 t }{4 x^2} } 
\]
(see \cite{durbin}, eq. (3.4.9)).
\item 
Note that for $  \mu = 0 $ we have that  \eqref{eq:max-bmd-mu0} coincides with the
distribution of the maximum of the Brownian bridge, that is
\begin{equation*}
P\bigg\{ \max_{0 \leq z \leq t} B(z) <x \Big |  \inf_{0 < z < t} B (z) > 0, B(0) = 0  \bigg\}
= 
P\bigg\{ \max_{0 \leq z \leq t} |B(z)| < \frac x2 \Big |  B(0) = 0, B(t) = 0  \bigg\} \,\, .
\end{equation*}
\end{enumerate}

\subsection{First passage times.}
We now study the distribution of the first passage time of a Brownian motion
 evolving 
under the condition that the minimum of the process is larger than $ v $ (eventually $ v=0 $) up to time $ t > 0$,
the process being free afterwards.
The first passage time is denoted  by $ T_x = \inf \{s < t' : B^\mu(s) = x\} $, $ t'>t $. 

For $ s <  t $ we infer  from \eqref{eq:max-bmd-ini}  that 
\begin{align*}
	&
	P\left\{ T_x > s \Big| \min_{ 0 \leq z \leq t }B^\mu( z) > v, B^\mu(0) = u \right\} 
	\numberthis \label{eq:first-psg-dist}
	\\
	&=
	\int_v^x P\left\{ \max_{ 0 \leq z \leq s }B^\mu( z) < x, \min_{ 0 \leq z \leq s }B^\mu( z) > v , 
	B^\mu ( s ) \in \mathrm d y  \Big | B^\mu ( 0) = u \right\}	\times 
	\\
	& \qquad \times 
	\frac{
		P\left\{  \min_{ s \leq z \leq t }B^\mu( z) > v \Big | B^\mu ( s) = y \right\}			
	}{
		P\left\{  \min_{ 0 \leq z \leq t }B^\mu( z) > v \Big | B^\mu ( 0) = u \right\}
	}
	\\
	&=
	\int_v^x \, \frac{\mathrm d y}{\sqrt{2 \pi s}} \sum_{k = - \infty}^{+\infty}
	\left[ 
	e^{    -   \frac{(y - u - 2k(x-v))^2}{2s} } 
	- 
	e^{    -   \frac{(2v - y - u +2k(x-v))^2}{2s} }  
	\right ] 
	e^{ - \frac{\mu^2s}{2} + \mu ( y - u - 2k(x-v))} 
	\,\,\times 
	\\
	& \qquad \times 
	\frac{
		P\Big\{   \min_{s \leq z \leq t} B^\mu(z)   > v  \Big | B^\mu ( s) = y \Big\}
	}{
		\int_v^\infty 
		\left[ 
		e^{  - \frac{(w-u)^2}{2t}  } 
		- 
		e^{  - \frac{(2v - w-u)^2}{2t}  } 
		\right]
		e^{  - \frac{\mu^2 t}{2} + \mu ( w - u)   } \, \frac{ \mathrm d w}{\sqrt{2 \pi t} } 
	}
\end{align*}
for $ x > v$. If $ s > t $ we have that
\begin{align*}
&
	P\left\{ T_x \in \mathrm d s \Big| \min_{ 0 \leq z \leq t }B^\mu( z) > v, B^\mu(0) = u \right\} 
	/ \mathrm d s
	\numberthis \label{eq:first-psg-dist-ts}
	\\
	&=
	\int_v^x 
	P\left\{ T_x\in \mathrm d s \Big| B^\mu(t) = y \right\} / \mathrm d s
	\,\,
	P\left\{  
	B^\mu(t) \in \mathrm d y, \max_{0 \leq z \leq t} B^\mu (z) < x \Big| \min_{ 0 \leq z \leq t } B^\mu(z) > v , B^\mu(0) = u
	\right\}
	\\
	&=
	\frac{
		\int_v^x
		\frac{ (x-y)\, e^{ - \frac{  |x- y - \mu (s-t)|^2} {2(s-t)} } }
		{\sqrt{2 \pi (s-t)^3} } 
		\sum_{k = - \infty}^{+\infty}
		\left[ 
		e^{    -   \frac{(y - u - 2k(x-v))^2}{2t} } 
		- 
		e^{    -   \frac{(2v - y - u +2k(x-v))^2}{2t} }  
		\right ] 
			e^{  \mu ( y  - 2k(x-v))}
		 \, \mathrm d y}
	{\int_v^\infty \left(  e^{- \frac{(w - u)^2}{2t}} -  e^{- \frac{(2v - w - u)^2}{2t}}  \right)
	e^{\mu w} \, \mathrm d w 
	} \,\, .
\end{align*}
%
%
For  $  u \to v $ and $ s < t $ the first-passage time becomes
\begin{align*}
&
	P\left\{ T_x > s \Big| \min_{ 0 \leq z \leq t }B^\mu( z) > v, B^\mu(0) = v \right\} =
	\numberthis \label{eq:first-psg-dist-v}
	\\
	&=
		\frac{ 
	\int_v^x \sum_{k = - \infty}^{+\infty}
	e^{    -   \frac{(y - v - 2k(x-v))^2}{2 s} } 
	\frac { (y - v - 2k (x-v) )}{s\sqrt{2 \pi s }}
	e^{ - \frac {\mu^2 s } 2 + \mu( y-v - 2 k \mu (x-v) ) }
	\,   
%
		%
		m(t;s,y) \,\mathrm d y
	}{
		\int_v^\infty 
		e^{  - \frac{(w-v)^2}{2t}  } 
		\frac{ (w-v)  }{\sqrt{2 \pi t} }
		e^{ \mu (w-v) + \frac{\mu^2 t}{2}   } \,  \mathrm d w
	} \,\,
\end{align*}
where 

\begin{align*} 
	&
	m(t;s,y) = 
 	P\Big\{   \min_{s \leq z \leq t} B^\mu(z)   > v  \Big | B^\mu ( s) = y \Big\}
 	\numberthis \\
 	&=
 	\int_v^\infty 
 	\left[ 
 	e^{  - \frac{(w-y)^2}{2(t-s)}  } 
 	- 
 	e^{  - \frac{(2v - w-y)^2}{2(t-s)}  } 
 	\right]
 	e^{  - \frac{\mu^2 (t-s)}{2} + \mu ( w - y) }  \frac{\mathrm d w}{\sqrt{2\pi(t-s)}} \,\, .
\end{align*}

In the case $  t < s< t' $, the limit for $ u \downarrow v $ in 
\eqref{eq:first-psg-dist-ts}
yields
\begin{align*}
&
P\left\{ T_x \in \mathrm d s \Big| \inf_{ 0 < z < t }B^\mu( z) > v, B^\mu(0) = v \right\} 
/ \mathrm d s
\numberthis \label{eq:first-psg-dist-ts-v}
\\
&=
\frac{\int_v^x 
	\frac{ x-y}
	{\sqrt{2 \pi (s-t)^3} } 
	e^{ - \frac{  |x- y - \mu (s-t)|^2} {2(s-t)} }
	\sum_{k = - \infty}^{+\infty}
	(y - v - 2k(x-v))
	e^{    -   \frac{(y - v - 2k(x-v))^2}{2t} } 
	e^{  \mu ( y  - 2k(x-v))}
	\, \mathrm d y}
{
	\int_0^\infty 
	(w-v) e^{- \frac{ (w - v)^2} { 2 t } + \mu w }  
	\, \mathrm d w
}\,\,.
\end{align*}
Some further simplification can be obtained in  \eqref{eq:first-psg-dist-v} and \eqref{eq:first-psg-dist-ts-v}
for $ v=0 $. For $ s < t  $ we have in particular that \eqref{eq:first-psg-dist-v}
further simplifies as 
\begin{align*}
&
P\left\{ T_x > s \Big| \inf_{ 0 < z < t }B^\mu( z) > 0, B^\mu(0) = 0 \right\} 
\numberthis \label{eq:first-psg-dist-0}
\\
&=
\int_0^x \sum_{k = - \infty}^{+\infty}
e^{    -   \frac{(y  - 2kx)^2}{2 s} } 
\frac { (y -  2k x )}{s\sqrt{2 \pi s }}
e^{ -2 k \mu x}
\,   \mathrm d y
\frac{ 
	\int_0^\infty
	\left[
	e^{  - \frac{(w - y )^2}{  2 (t-s) }    }
	- 
	e^{  - \frac{  (w + y )^2}{ 2 (t-s) }    } \right]
	\, \frac{ e^{ \mu w}  }{\sqrt{2 \pi (t-s)} }
	 \mathrm d w
}{
	\int_0^\infty 
	\frac w {t \sqrt{2 \pi t} }
	e^{  - \frac{w^2}{2t} + \mu w   } 	 
	\mathrm d w
} \,\,  .
\end{align*}


%

\section{Representation of the Brownian meander with drift}

For the non-drifted meander $ M(t), t>0 $ it is well known that the following representation holds
\begin{equation}\label{eq:mdr-repr}
\frac{
\Big|  B(T_0 + s (t - T_0)) \Big|
}{\sqrt{t - T_0}}
\stackrel{i.d.}{=} M(s) \qquad 0 < s < 1
\end{equation}
where $ T_0 =  \sup\{  s < t : B(s) = 0\} $ and $ B(t), t>0  $  is a standard Brownian motion (see \citet{pitman99}).

For the Brownian meander with drift we have instead the following result.

\begin{theorem}
	\begin{align}\label{eq:mdr-repr-thm}
	P\left\{   
	\frac { 
	\Big | B^\mu\Big ( T_0^\mu + s ( t - T_0^\mu) \Big )  \Big |    
	}
	{
	\sqrt{t - T^\mu_0}
	} \in \mathrm d y 
	\right\} = 
	\frac 12 
	\left[
	P\left\{ M^{- \mu \sqrt{ t - T_0^\mu}}(s) \in\mathrm d y \right\} + 
	P\left\{ M^{\mu \sqrt{ t - T_0^\mu}} (s)\in\mathrm d y \right\}  
	\right]
	\end{align}
for $  0< s < 1 $, where $ T_0^\mu $ is the last passage time through zero before $ t $ of $ B^\mu $ and has distribution
\begin{align*}
P( T _0^\mu \in \mathrm d a) / \mathrm d a &=
\frac{e^{- \frac{\mu^2 t }{2}}}{\pi \sqrt{a (t-a)}} + \frac{\mu^2}{2 \pi} \int_a^t \frac{e^{ - \frac{ \mu^2 a }{2 }}}{\sqrt{a ( y -a )}} 
\, \mathrm d y 
\numberthis 
\\
&=
\mathbb E \left( \frac{1 }{\pi \sqrt{a ( W -a )}} \mathds{1}_{W \geq a } \right) \qquad 0 <a<t
\end{align*}
where $  W  $ is a truncated exponential distribution with density
\begin{align*}
&f_W(w) = \frac{\mu^2}{2} e^{  - \frac{\mu^2}{2} w  } \qquad 0<w<t \\
&P(W = t ) = e^{-\frac{\mu^2 t}{2}}\, .
\end{align*}
\end{theorem}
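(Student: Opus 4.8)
The plan is to condition on the last zero $T_0^\mu=\sup\{s<t:B^\mu(s)=0\}$, which splits the statement into two independent tasks: finding the law of $T_0^\mu$, and describing the conditional law of $s\mapsto|B^\mu(T_0^\mu+s(t-T_0^\mu))|/\sqrt{t-T_0^\mu}$ given $T_0^\mu$. The engine is the Cameron--Martin--Girsanov identity $\mathrm dP^\mu/\mathrm dP^0\big|_{\mathscr F_t}=\exp(\mu B(t)-\mu^2t/2)$, which pushes every computation down to the driftless process, together with the classical picture of Brownian motion around its last zero before $t$: under $P^0$, $T_0$ has the arcsine density $1/(\pi\sqrt{a(t-a)})$ on $(0,t)$, and conditionally on $\{T_0=a\}$ the pre-$a$ and post-$a$ fragments are independent, the post-$a$ fragment being $\varepsilon$ times a Brownian meander on $[a,t]$ for an independent sign $\varepsilon=\pm1$ with $P(\varepsilon=\pm1)=\tfrac12$; in particular $B(t)=\varepsilon\sqrt{t-a}\,R$ with $R$ a Rayleigh variable independent of $\varepsilon$. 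Working with $\{T_0^\mu\in\mathrm da\}$ in place of the null event $\{T_0^\mu=a\}$ takes care of the conditioning.

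First I would compute the law of $T_0^\mu$. Since $\{T_0^\mu\le a\}=\{B^\mu$ has no zero in $[a,t]\}$ lies in $\mathscr F_t$, Girsanov gives $P^\mu(T_0^\mu\in\mathrm da)=\mathbb E^0[\mathds 1_{\{T_0\in\mathrm da\}}e^{\mu B(t)-\mu^2t/2}]$, and since $B(t)$ reads off only the post-$a$ fragment, conditioning on $\{T_0=a\}$ and averaging over $\varepsilon$ gives $P^\mu(T_0^\mu\in\mathrm da)=\frac{\mathrm da}{\pi\sqrt{a(t-a)}}\,e^{-\mu^2t/2}\,\tfrac12\big(\mathbb E[e^{\mu\sqrt{t-a}\,R}]+\mathbb E[e^{-\mu\sqrt{t-a}\,R}]\big)$. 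Using $\mathbb E[e^{cR}]=1+c\,e^{c^2/2}\int_{-c}^{\infty}e^{-v^2/2}\,\mathrm dv$ with $c=\pm\mu\sqrt{t-a}$ and $e^{-\mu^2t/2}e^{\mu^2(t-a)/2}=e^{-\mu^2a/2}$, this reduces to $\frac{e^{-\mu^2t/2}}{\pi\sqrt{a(t-a)}}+\frac{\mu e^{-\mu^2a/2}}{\pi\sqrt a}\int_{0}^{\mu\sqrt{t-a}}e^{-v^2/2}\,\mathrm dv$, and the substitution $v=\mu\sqrt{y-a}$ recasts the second term as $\frac{\mu^2}{2\pi}\int_a^t\frac{e^{-\mu^2y/2}}{\sqrt{a(y-a)}}\,\mathrm dy$; recognising this as $\mathbb E\big(\mathds 1_{\{W\ge a\}}/(\pi\sqrt{a(W-a)})\big)$ with $W$ exponential of rate $\mu^2/2$ truncated at $t$ (so that $P(W\ge a)=e^{-\mu^2a/2}$ and $P(W=t)=e^{-\mu^2t/2}$) finishes this part.

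For \eqref{eq:mdr-repr-thm} itself I would again condition on $\{T_0^\mu\in\mathrm da\}$ and Girsanov the post-$a$ fragment. Under $P^0$, conditionally on $\{T_0=a\}$, the rescaled path $s\mapsto B(a+s(t-a))/\sqrt{t-a}$ equals $\varepsilon M$ for a driftless meander $M$ on $[0,1]$, and $e^{\mu B(t)-\mu^2t/2}=e^{-\mu^2t/2}e^{\mu\varepsilon\sqrt{t-a}\,M(1)}$ depends on it only through $\varepsilon M(1)$. On $\{\varepsilon=+1\}$, tilting $M$ by $e^{\mu\sqrt{t-a}\,M(1)}$ produces the Brownian meander with constant drift $\mu\sqrt{t-a}$: indeed, writing $B^\mu(a+r)=\tilde B(r)+\mu r$ on $\{B^\mu(a)=0\}$ with $\tilde B$ a standard Brownian motion and conditioning to stay positive is precisely the $u\downarrow0$ construction of Section~\ref{sec:joint}, and rescaling time and space by $t-a$ turns the constant drift $\mu$ into the constant $\mu\sqrt{t-a}$ — its one-dimensional laws being \eqref{eq:mdr-0-dist-intro} with $v=0$. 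On $\{\varepsilon=-1\}$ the reflection $B^\mu\mapsto-B^\mu$ makes the post-$a$ fragment a positive drift-$(-\mu)$ excursion, so the rescaled modulus is the meander with drift $-\mu\sqrt{t-a}$. Collecting the two contributions, with the factor $\tfrac12$ coming from the equidistribution of $\varepsilon$, and integrating against $P^\mu(T_0^\mu\in\mathrm da)$, gives \eqref{eq:mdr-repr-thm} and the equivalent form with an independent fair coin $X$ (as in \eqref{eq:mdr-repr-intro}).

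The main obstacle is the bookkeeping in the last step: the two exponential tilts $e^{\pm\mu\sqrt{t-a}\,M(1)}$ have distinct normalising constants $\mathbb E[e^{\pm\mu\sqrt{t-a}\,R}]$, so one must check carefully that, once these are combined with the density of $T_0^\mu$, the two drifted meanders indeed enter \eqref{eq:mdr-repr-thm} with weight exactly $\tfrac12$ each, and that the post-$T_0^\mu$ fragment really is the drifted meander of Section~\ref{sec:joint} — most safely by transporting the driftless last-exit decomposition through Girsanov rather than rebuilding excursion theory with drift — including the point that the scaling by $t-T_0^\mu$ produces the constant drift $\mu\sqrt{t-T_0^\mu}$ and not a genuinely time-dependent drift. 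The remaining ingredients — the Rayleigh moment generating function, the change of variables in the $T_0^\mu$ density, and the identification of the truncated exponential $W$ — are routine.
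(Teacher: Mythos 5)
Your overall strategy --- decompose at the last zero $T_0^\mu$, split according to the sign of the final excursion, and identify the rescaled modulus on each branch as a meander with drift $\pm\mu\sqrt{t-T_0^\mu}$ --- is the same as the paper's; the difference is that you transport everything from the driftless last-exit decomposition via Girsanov, whereas the paper computes directly with the drifted absorbed transition density and the limit law \eqref{eq:bmd-drift-dist-prooved}. Your derivation of the density of $T_0^\mu$ is a genuine addition: the paper states that density without proof, and your route through $\mathbb E^0\big[\mathds 1_{\{T_0\in\mathrm da\}}e^{\mu B(t)-\mu^2t/2}\big]$, the Rayleigh moment generating function and the substitution $v=\mu\sqrt{y-a}$ is correct and self-contained. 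The identification of the exponential tilt of the driftless meander by $e^{cM(1)}$ with the drifted meander $M^c$ of Section~\ref{sec:joint} is also right.

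The gap sits exactly at the point you flag as ``the main obstacle,'' and it does not close in the way you assert. In your own bookkeeping the joint law of $(T_0^\mu,\varepsilon,\tilde M)$ is the $P^0$-law tilted by $e^{\mu\varepsilon\sqrt{t-a}\,\tilde M(1)-\mu^2t/2}$, so conditionally on $\{T_0^\mu=a\}$ the sign $\varepsilon$ is \emph{not} equidistributed: one gets $P(\varepsilon=\pm1\mid T_0^\mu=a)=\mathbb E[e^{\pm cR}]/(\mathbb E[e^{cR}]+\mathbb E[e^{-cR}])$ with $c=\mu\sqrt{t-a}$ (sanity check: $\varepsilon=\operatorname{sgn}B^\mu(t)$ and $P(B^\mu(t)>0)=\Phi(\mu\sqrt t)\neq\tfrac12$ for $\mu\neq0$). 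Hence the conditional law of the rescaled modulus is the mixture of $M^{c}$ and $M^{-c}$ with these tilted weights: the two normalising constants $\mathbb E[e^{\pm cR}]$ cancel only against their \emph{sum} in $P^\mu(T_0^\mu\in\mathrm da)$, not individually, so the ``factor $\tfrac12$ coming from the equidistribution of $\varepsilon$'' is a $P^0$-statement that does not survive the tilt. To arrive at the stated $\tfrac12$--$\tfrac12$ form of \eqref{eq:mdr-repr-thm} you would need to prove that the two signs are conditionally equiprobable given $T_0^\mu$; this is precisely the step the paper performs by evaluating $\lim_{u\downarrow0}P(\sup_{[z,t]}B^\mu<0\mid B^\mu(z)=-u)\big/P(\inf_{[z,t]}B^\mu>0\mid B^\mu(z)=u)$. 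Note that this is a $0/0$ limit (both survival probabilities vanish linearly in $u$), and differentiating numerator and denominator at $u=0$ gives $\mathbb E[(N(-\mu\tau,\tau))^+]/\mathbb E[(N(\mu\tau,\tau))^+]=\mathbb E[e^{-cR}]/\mathbb E[e^{cR}]$ with $\tau=t-a$ --- i.e.\ exactly your tilted ratio, not $1$. You must therefore either supply an argument that overrides the Girsanov computation and justifies the equal weights, or carry the weights $\mathbb E[e^{\pm cR}]/(\mathbb E[e^{cR}]+\mathbb E[e^{-cR}])$ through to the conclusion; as written, the final step of your proposal does not deliver \eqref{eq:mdr-repr-thm}.
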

\begin{proof}
	We start by observing that, for $  0< s<1 $
	\begin{align}
		P\left\{   
	\frac { 
		\Big | B^\mu\Big ( T_0^\mu + s ( t - T_0^\mu) \Big )  \Big |    
	}
	{
		\sqrt{t - T^\mu_0}
	} <  y 
	\right\} 
	= 
	\int_0^t P\left\{T_0^\mu \in \mathrm d z \right\} \int_{ -y \sqrt{t-z}}^{ y \sqrt{t-z}}
	P \left\{ 
	 B^\mu  ( T_0^\mu + s ( t - T_0^\mu) )  \in \mathrm d w \Big | T_0^\mu = z
	\right\}
	\end{align}
	and thus 
	\begin{align*}
	&
	P\left\{   
	\frac { 
		\Big | B^\mu\Big ( T_0^\mu + s ( t - T_0^\mu) \Big )  \Big |    
	}
	{
		\sqrt{t - T^\mu_0}
	} \in \mathrm d y 
	\right\} 
	\numberthis 
	\label{eq:mdr-repr-full}
	\\
	 &=
	 \int_0^t P\left\{T_0^\mu \in \mathrm d z \right\} \sqrt{t -z}
	 \big[
	 P \left\{ 
	 B^\mu ( z + s (t-z)) \in \mathrm d (  y \sqrt{t-z}) | T_0^\mu = z
 	 \right\} 
 	 \\
 	 & \qquad 
 	 +
 	 P \left\{ 
 	 B^\mu ( z + s (t-z)) \in \mathrm d ( - y \sqrt{t-z}) | T_0^\mu = z
 	 \right\} 
	 \big]
	\end{align*}
	Note that 
	\begin{align*}
	&
	P\left\{ 
	B^\mu ( z + s (t-z)) \in \mathrm d y \Big | (T_0^\mu = z) \cap 
	\Big ( 
	\big( 
	\inf_{z < l \leq t} B^\mu ( l ) > 0 
	 \big)
	 \cup 
	 \big( 
	 \sup_{z < l \leq t} B^\mu ( l ) < 0 
	 \big)
	 \Big)
	\right\}
	\numberthis 
	\\
	&=
	P\left\{ 
	B^\mu ( z + s (t-z)) \in \mathrm d y | (T_0^\mu = z) \cap 
	(C
	\cup 
	D)
	\right\}
	\\
	&=
	P\left\{ 
	B^\mu ( z + s (t-z)) \in \mathrm d y | (B^\mu(z) = 0) \cap 
	C
	\right\} 
	\frac{\displaystyle
		P(C | B^\mu(z) = 0 )}
	{
		\displaystyle
			P(C | B^\mu(z) = 0 )) + 
		P(D | B^\mu(z) = 0 ))}
	\\
	& 
	\qquad + 
	P\left\{ 
	B^\mu ( z + s (t-z)) \in \mathrm d y | (B^\mu(z) = 0) \cap 
	D
	\right\} 
	\frac{\displaystyle
		P(D | B^\mu(z) = 0 )}
	{
		\displaystyle
		P(C | B^\mu(z) = 0 )) + 
		P(D | B^\mu(z) = 0 ))}
	\end{align*}
	The ratio 
	$  \frac{P(D | B^\mu(z) = 0)}{P( C |B^\mu(z) = 0 )} $
	can be evaluated as the following limit
	\begin{align*}
	\frac{P(D | B^\mu(z) = 0)}{P( C |B^\mu(z) = 0 )} 
	&= 
	\lim_{u \downarrow 0 } 
	\frac{
		P ( \max_{z \leq l \leq t} B^\mu ( l ) < 0  | B^\mu(z) =  - u)
	}{P ( \min_{z \leq l \leq t} B^\mu ( l ) > 0  | B^\mu(z) =  u)}
	\\
	&=
	\lim_{u \downarrow 0 } 
	\frac{
		P ( \max_{0 \leq l \leq t - z } B^\mu ( l ) < 0  | B^\mu(0) =  - u)
	}{P ( \min_{0 \leq l \leq t -z } B^\mu ( l ) > 0  | B^\mu(0) =  u)}
	\\
	&=
	\lim_{u \downarrow 0 } 
	\frac{
		P ( \overline T_0^\mu > t -z   | B^\mu(0) =  - u)
	}{P ( \overline T_0^\mu > t -z   | B^\mu(0) =  u)}
	\\
	&=
	\lim_{u \downarrow 0 } 
	\frac{
		1 - \int_0^{t-z} 
		u \, \frac{
		e^{ - \frac{ (u - \mu s ) ^ 2 }{2 s } }	
	}{\sqrt{2 \pi s^3}} \, \mathrm d s
	}{
	1 - \int_0^{t-z} 
	u \, \frac{
		e^{ - \frac{ (u + \mu s ) ^ 2 }{2 s } }	
	}{\sqrt{2 \pi s^3}} \, \mathrm d s
	}
	=
	1 
	\end{align*}
	where
	$  \overline T _ 0 ^ \mu = \inf\{ s : B^\mu(s) = 0\} $ is the first passage time of a drifted 
	Brownian motion starting either above or below zero. 
	
	In view of formula \eqref{eq:bmd-drift-dist-prooved}, for $  v = 0, s = l - z, t $ replaced by $  t -z  $
	we obtain, by setting $  l = z + s ( t-z) $ that
	\begin{align*}
	&
	P\left\{ 
	B^\mu ( l) \in \mathrm d y | B^\mu(z)  = 0 ,  
	\inf_{z < w \leq t} B^\mu ( w ) > 0 
	\right\}
	\numberthis 
	\label{eq:mdr-repr-min}
	\\
	&=
	P\left\{ 
	B^\mu ( l - z ) \in \mathrm d y | B^\mu(z)  = 0 ,  
	\inf_{0 < w \leq t - z } B^\mu ( 0 ) > 0 
	\right\}
	\\
	&=
	\left ( \frac{t - z  }{l - z }\right ) ^{\!\frac 32}
	\frac{
		\mathrm d y \,\, 
		y e^{ - \frac{ y^2}{2 (l-z) }} 		
	}{
		\int_0^\infty 
		w 
		e^{- \frac{ w^2}{2 (t-z)}  + \mu w}    
		\, \mathrm d w 		
	}\,
	\int_0^\infty 
	\Big( 
	e^{- \frac{ ( w - y)^2}{2(t-l)}} 
	- e^{   - \frac{ (w+y)^2}{2 ( t-l)}   }
	\Big ) 
	\frac{  e^{ \mu w } } {\sqrt{2 \pi (t - l)}}
	 \, \mathrm d w
	\end{align*}
	In the same way we have that
	\begin{align*}
	&
	P\left\{ 
	B^\mu ( l) \in \mathrm d y | B^\mu(z)  = 0 ,  
	\sup_{z < w \leq t} B^\mu ( w ) < 0 
	\right\}
	\numberthis 
	\label{eq:mdr-repr-max}
	\\
	&=
	\left ( \frac{t - z  }{l - z }\right ) ^{\!\frac 32}
	\frac{
		\mathrm d y \,\, 
		y e^{ - \frac{ y^2}{2 (l-z) }} 		
	}{
		\int_0^\infty 
		w 
		e^{- \frac{ w^2}{2 (t-z)}  - \mu w}    
		\, \mathrm d w 		
	}\,
	\int_0^\infty 
	\Big( 
	e^{- \frac{ ( w - y)^2}{2(t-l)}} 
	- e^{   - \frac{ (w+y)^2}{2 ( t-l)}   }
	\Big ) 
	\frac{  e^{ - \mu w } } {\sqrt{2 \pi (t - l)}}
	\, \mathrm d w
	\end{align*}
	By inserting  \eqref{eq:mdr-repr-min} and  \eqref{eq:mdr-repr-max} 
	into \eqref{eq:mdr-repr-full}  we obtain 
	\begin{align*}
	&
	P\left\{   
	\frac { 
		\Big | B^\mu\Big ( T_0^\mu + s ( t - T_0^\mu) \Big )  \Big |    
	}
	{
		\sqrt{t - T^\mu_0}
	} \in \mathrm d y 
	\right\} 
	=
	\frac{1}{2}
	\int_0^t P \{ T_0^\mu \in \mathrm d z \}  \quad \times
	\numberthis 
	\label{eq:mdr-repr-full-ins}
	\\
	& \qquad \times 
	\Bigg[ 
	 \frac{1}{s^{\!\frac 32} } 
	\frac{
		\mathrm d y \,\, 
		y \sqrt{t-z} \, e^{ - \frac{ y^2}{2 s }} 		
	}{
		\int_0^\infty 
		w 
		e^{- \frac{ w^2}{2 (t-z)}  + \mu w}    
		\, \mathrm d w 		
	}\,
	\int_0^\infty 
	\Big( 
	e^{- \frac{ ( w - y \sqrt{t-z}   )^2  }{    2(t-z)(1-s)}} 
	- e^{   - \frac{ (w+y \sqrt{t-z} )^2}{2 ( t-z)(1-s)}   }
	\Big ) 
	\frac{  e^{ \mu w } } {\sqrt{2 \pi (1-s)}}
	\, \mathrm d w 
	\\
	& \qquad \hphantom{\Bigg[  }  + 
	\frac{1}{s^{\!\frac 32} } 
	\frac{
		\mathrm d y \,\, 
		y \sqrt{t-z} \, e^{ - \frac{ y^2}{2 s }} 		
	}{
		\int_0^\infty 
		w 
		e^{- \frac{ w^2}{2 (t-z)}  - \mu w}    
		\, \mathrm d w 		
	}\,
	\int_0^\infty 
	\Big( 
	e^{- \frac{ ( w - y \sqrt{t-z}   )^2  }{    2(t-z)(1-s)}} 
	- e^{   - \frac{ (w+y \sqrt{t-z} )^2}{2 ( t-z)(1-s)}   }
	\Big ) 
	\frac{  e^{ - \mu w } } {\sqrt{2 \pi (1-s)}}
	\, \mathrm d w
	\Bigg] 
	\end{align*}
	The transformation $ w = w' \sqrt{t-z} $ further simplifies \eqref{eq:mdr-repr-full-ins}
	as
	\begin{align*}
	&
	P\left\{   
	\frac { 
		\Big | B^\mu\Big ( T_0^\mu + s ( t - T_0^\mu) \Big )  \Big |    
	}
	{
		\sqrt{t - T^\mu_0}
	} \in \mathrm d y 
	\right\} 
	=
	\frac{1}{2}
	\int_0^t P \{ T_0^\mu \in \mathrm d z \}  \quad \times
	\numberthis 
	\label{eq:mdr-repr-full-ins-2}
	\\
	& \qquad \times 
	\Bigg[ 
	\frac{
		\mathrm d y \,\, 
		s^{ -\frac 32}
		y \, e^{ - \frac{ y^2}{2 s }} 		
	}{
		\int_0^\infty 
		w 
		e^{- \frac{ w^2}{2}  + \mu w\sqrt{t-z}}    
		\, \mathrm d w 		
	}\,
	\int_0^\infty 
	\Big( 
	e^{- \frac{ ( w - y    )^2  }{    2(1-s)}} 
	- e^{   - \frac{ (w+y  )^2}{2 (1-s)}   }
	\Big ) 
	\frac{  e^{ \mu w \sqrt{t-z}} } {\sqrt{2 \pi (1-s)}}
	\, \mathrm d w 
	\\
	& \qquad \hphantom{\Bigg[  }  + 
	\frac{
		\mathrm d y \,\, 
		s^{ -\frac 32}
		y \, e^{ - \frac{ y^2}{2 s }} 		
	}{
		\int_0^\infty 
		w 
		e^{- \frac{ w^2}{2 }  - \mu w\sqrt{t-z}}    
		\, \mathrm d w 		
	}\,
	\int_0^\infty 
	\Big( 
	e^{- \frac{ ( w - y   )^2  }{    2(1-s)}} 
	- e^{   - \frac{ (w+y  )^2}{2 (1-s)}   }
	\Big ) 
	\frac{  e^{ - \mu w \sqrt{t-z} } } {\sqrt{2 \pi (1-s)}}
	\, \mathrm d w
	\Bigg] 
	\end{align*}
	We recognize  inside \eqref{eq:mdr-repr-full-ins-2} the distribution of a drifted Brownian meander
	whose drift can be written as 
	\[
	\mu X \sqrt{t - T^\mu_0} 
	\]
	where $ X $ is a two-valued, symmetric r.v. independent from $ T^\mu_0  $ which is 
	the last-passage time through zero, before $ t $, of  the Brownian motion $ B^\mu $. 
	In conclusion we have that 
	\begin{align*}
	P\left\{   
	\frac { 
		\Big | B^\mu\Big ( T_0^\mu + s ( t - T_0^\mu) \Big )  \Big |    
	}
	{
		\sqrt{t - T^\mu_0}
	} \in \mathrm d y 
	\right\} 
	=
	P\left\{ 
	M^{\mu X \sqrt{t - T^\mu_0} } ( s) \in \mathrm d y
	\right\}
	\end{align*}
\end{proof}
\begin{remark}
	For $ \mu = 0 $ result \eqref{eq:mdr-repr-thm} coincides with \eqref{eq:mdr-repr}, valid 
	for the non-drifted Brownian meander.
\end{remark}


%
%

\section{Excursion with drift}

We now present some results about the Brownian excursion with drift. 
We provide a construction which  is analogous to that of the drifted Brownian meander. 
We consider the sequence of conditional processes $ \{ B^\mu | \Lambda_{u,v,c} : u,c >v \}_{} $ whose
sample paths lie in sets of the form 
$\Lambda_{u,v,c} = \{ \omega \in C[0,T]  :{\min_{ 0 \leq z \leq t } \omega(z) > v },\,\, \omega(0) = u,\,\, \omega(t) = c  \}$, with $ c,u > v $. We study the distribution of these processes and of process
$ B^\mu | \Lambda_{v,v,v} $ defined as the weak limit of $  B^\mu | \Lambda_{u,v,c} $ when the starting 
point $ u $ and the endpoint $ c $ collapse onto the barrier $ v $.

The main tool used to carry out the needed calculation is the distribution of an absorbing Brownian motion  
with distribution  \eqref{eq:bm-drift-recall-new}. With this at hand we can write
\begin{align*}
	&
	P \Big\{ 
	B^\mu(s) \in \mathrm d y \Big | \min_{ 0 \leq z \leq t } B^\mu(z) > v, B^\mu(0) = u, B^\mu(t) = c
	\Big\}
	 = \numberthis \label{eq:bmd-bdg-dft}\\
	&= 
		\frac{
			e^{- \frac{( y- \mu s -u)^2}{2s} } - 
			e^{ - 2 \mu u}
			e^{- \frac{( y + u - 2 v - \mu s )^2}{2s} }
		}{\sqrt{2\pi s}} \cdot 
		\frac{
			e^{- \frac{( c- \mu(t-s) - y)^2}{2(t-s)} } - 
			e^{ - 2 \mu y} 
			e^{- \frac{(c + y - 2 v  - \mu (t-s) )^2}{2(t-s)} }
		}{\sqrt{2\pi (t-s)}} 
		%
	%
	%
	\times \\
	&
	\qquad \times 
	\left( 
		\frac{
			e^{- \frac{( c- \mu t -  u)^2}{2t} } -
			e^{-2\mu u } 
			e^{- \frac{( c + u - 2 v  - \mu t )^2}{2t} }
		}{\sqrt{2 \pi t}}
	\right)^{\!-1}	
	\,  \mathrm d y \\
	&=
	\sqrt{\frac{t}{2 \pi s(t-s)}}	\!
		%
		\Big(
		e^{- \frac{( y- u)^2}{2s} } -
		e^{- \frac{( 2v - y   - u)^2}{2s} }
		\Big)
		e^{ - \frac{\mu^2 s}{2} + \mu (y-u)}
	%
	%
	%
		%
	\Big(
	e^{- \frac{( c -y)^2}{2(t-s)} } - 
	e^{- \frac{( 2v - c  - y)^2}{2(t-s)} }
	\Big)
	e^{- \frac{ \mu^2(t-s)}{2} + \mu(c-y) }
	\\
	& \qquad \times
	\left[ 
	\left( 
	e^{- \frac{( c - u)^2}{2t} } - 
	e^{- \frac{( 2v -c - u)^2}{2t} }
	\right)
	e^{ - \frac{\mu^2 t}{2} + \mu (c-u)}
	\right]^{\!-1}
	  \mathrm d y
\end{align*}

It is straightforward to check that in \eqref{eq:bmd-bdg-dft} the 
resulting distribution does not depend on the value of the drift $ \mu $. This consideration holds
for any finite dimensional distribution of the drifted excursion, as stated in the following theorem.

\begin{theorem}\label{thm:exc-fin-dim}
	The finite dimensional distributions of the drifted Brownian excursion process $ B^\mu | \Lambda_{u,v,c}  $ are given by  
\begin{align*}
&
P \bigg\{ \bigcap_{j=1}^n \left( B^\mu(s_j) \in \mathrm d y_j \right) \,\Big \vert \min_{0\leq z \leq t} B^\mu(z)> v , B^\mu(0)=u, B^\mu(t) = c\bigg \}  = \numberthis  \label{eq:exc-fin-dim}\\ 
&   \prod_{j=1}^{n}  
\Bigg \{ 
\frac{ e^{ - \frac{(y_j - y _{j-1})^2}{2(s_j - s_{j-1})}  } - 
e^{ - \frac{(2v - y_j - y _{j-1})^2}{2(s_j - s_{j-1})}  } }   {\sqrt{2 \pi ( s_j - s_{j-1})} } \,\mathrm d y_j
\Bigg\} 
	\frac{ e^{ - \frac{ (c - y_n) ^ 2}{2(t-s_n)} }
		- e^{ - \frac{ (2v - c  - y_n) ^ 2}{2(t-s_n)}} }{\sqrt{2 \pi (t - s_{n})}}
\left(
	\frac{ e^{ - \frac{ (c - u) ^ 2}{2t} }
		- e^{ - \frac{ (2 v - c  -  u) ^ 2}{2t}} }{\sqrt{2 \pi t}}
\right)^{\!\!-1}
\end{align*}
for $ 0 = s_0 < s_1 < \cdots < s_n < t, y_0 = u $ with $ y_j,u,c > v $, and the following equality in distribution holds
\begin{equation}\label{eq:exc-no-drift-eq}
B^\mu | \Lambda_{u,v,c} \stackrel{law}{=}   B | \Lambda_{u,v,c}. 
\end{equation}

\end{theorem}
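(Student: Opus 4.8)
The plan is to follow the same route as the proof of \autoref{thm:pre-mdr-joint}, conditioning now in addition on the terminal value $B^\mu(t)=c$. First I would recall that $B^\mu\mid\Lambda_{u,v,c}$ is Markov by \autoref{lem:markov-cond} (as already remarked, $\Lambda_{u,v,c}$ has the required product form over $[0,r]$ and $[r,1]$ for every $r$), so that its finite dimensional distributions can be assembled by successive conditioning. Concretely, for $0=s_0<s_1<\cdots<s_n<t$ I would write
\begin{align*}
&P\Big\{\bigcap_{j=1}^n\big(B^\mu(s_j)\in\mathrm d y_j\big)\,\Big\vert\,\min_{0\leq z\leq t}B^\mu(z)>v,\ B^\mu(0)=u,\ B^\mu(t)=c\Big\}\\
&=\frac{P_u\Big\{\bigcap_{j=1}^n\big(B^\mu(s_j)\in\mathrm d y_j,\ \min_{s_{j-1}<z\leq s_j}B^\mu(z)>v\big),\ \min_{s_n<z\leq t}B^\mu(z)>v,\ B^\mu(t)\in\mathrm d c\Big\}}{P_u\Big\{\min_{0\leq z\leq t}B^\mu(z)>v,\ B^\mu(t)\in\mathrm d c\Big\}}\,,
\end{align*}
and then condition successively on $\mathscr F_{s_{n-1}},\dots,\mathscr F_{s_1}$, each time invoking the Markov property of $B^\mu$ together with \eqref{eq:bm-drift-recall-new}, exactly as in \autoref{thm:pre-mdr-joint}. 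This factorises the numerator into a product of the absorbing-barrier transition densities of \eqref{eq:bm-drift-recall-new} over the intervals $[s_{j-1},s_j]$, together with a terminal density over $[s_n,t]$ landing at $c$.

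The key computation will be the cancellation of the drift. Each factor produced above carries the exponential weight $e^{-\mu^2(s_j-s_{j-1})/2+\mu(y_j-y_{j-1})}$ (with the convention $y_0=u$), and the terminal factor carries $e^{-\mu^2(t-s_n)/2+\mu(c-y_n)}$; since the time increments sum to $t$ and the spatial increments telescope to $c-u$, the product of all these weights equals $e^{-\mu^2 t/2+\mu(c-u)}$, independent of the intermediate points $y_1,\dots,y_n$. On the other hand, evaluating the last line of \eqref{eq:bm-drift-recall-new} at $s=t$, $y=c$ shows that the denominator equals $\big(e^{-(c-u)^2/2t}-e^{-(2v-c-u)^2/2t}\big)\,e^{-\mu^2 t/2+\mu(c-u)}/\sqrt{2\pi t}$, carrying exactly the same weight $e^{-\mu^2 t/2+\mu(c-u)}$. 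Dividing, this common factor disappears together with every occurrence of $\mu$, and collecting the remaining Gaussian kernels produces the right-hand side of \eqref{eq:exc-fin-dim}. Since that expression no longer involves $\mu$, it coincides with the finite dimensional distributions of $B\mid\Lambda_{u,v,c}$, which is precisely the equality in law \eqref{eq:exc-no-drift-eq}.

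I do not expect a serious obstacle here; the only point deserving a word of care is the conditioning on the null event $\{B^\mu(t)=c\}$. I would handle it as the meander bridge is handled elsewhere in the paper, namely as the ratio of the joint sub-densities displayed above (equivalently, as the regular conditional probability obtained by normalising by $P_u\{\min_{0\leq z\leq t}B^\mu>v,\ B^\mu(t)\in\mathrm d c\}$ and letting the increment $\mathrm d c$ shrink), the Markov factorisation of the numerator being valid for the unconditioned killed process $B^\mu$. The remainder is the bookkeeping of multiplying Gaussian kernels and the telescoping identity for the drift, both of which are already visible in the $n=1$ case \eqref{eq:bmd-bdg-dft} recorded immediately before the statement.
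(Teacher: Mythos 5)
Your proposal is correct and follows essentially the same route as the paper: a Markov factorisation of the numerator into the absorbing-barrier transition densities of \eqref{eq:bm-drift-recall-new}, followed by the observation that the drift weights $e^{-\mu^2(s_j-s_{j-1})/2+\mu(y_j-y_{j-1})}$ telescope to $e^{-\mu^2 t/2+\mu(c-u)}$ and cancel against the denominator, yielding the driftless expression and hence \eqref{eq:exc-no-drift-eq}. Your extra remarks on the telescoping bookkeeping and on interpreting the conditioning on $\{B^\mu(t)=c\}$ as a ratio of sub-densities only make explicit what the paper leaves implicit.
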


\begin{proof}
	By Markovianity we can write that
\begin{align*}
&
P \bigg\{ \bigcap_{j=1}^n \left( B^\mu(s_j) \in \mathrm d y_j \right) \,\Big \vert \min_{0\leq z \leq t} B^\mu(z)> v , B^\mu(0)=u, B^\mu(t) = c\bigg \}  = \\ 
& =  \prod_{j=i}^{n}  
\Bigg \{ 
\frac{ e^{ - \frac{(y_j - y _{j-1})^2}{2(s_j - s_{j-1})}  } - 
	e^{ - \frac{(2v - y_j - y _{j-1})^2}{2(s_j - s_{j-1})}  } }   {\sqrt{2 \pi ( s_j - s_{j-1})} }
e^{ - \frac{ \mu^2}{2} (s_j - s_{j-1}) +    \mu(y_j - y_{j-1})   }
\Bigg\} 
\times 
\\
&\qquad \times 
\frac{ e^{ - \frac{ (c - y_n) ^ 2}{2(t-s_n)} }
	- e^{ - \frac{ (2v - c  - y_n) ^ 2}{2(t-s_n)}} }{\sqrt{2 \pi (t - s_{n})}}
e^{ - \frac{ \mu^2}{2} (t - s_{n}) + \mu(c - y_{n}) }
\left(
\frac{ e^{ - \frac{ (c - u) ^ 2}{2t} }
	- e^{ - \frac{ (2 v - c  -  u) ^ 2}{2t}} }{\sqrt{2 \pi t}}
e^{- \frac{ \mu^2}{2} t + \mu(c - u)  }
\right)^{\!\!-1}
\end{align*}
thus the finite dimensional distributions of the excursion with drift coincide with the corresponding finite
dimensional distributions of the driftless Brownian excursion. 
\end{proof}
\begin{remark}
	
We observe that in analogy with result   \eqref{eq:exc-no-drift-eq} 
\citet{Beghin1999} showed that the distribution
\[
P\left\{  \max_{ 0 \leq s \leq t} B^\mu(s) > \beta \Big | B^\mu(t) = \eta \right\} 
=
e^{   -  \frac{ 2 \beta (\beta - \eta) }{t}} 
\]
is independent of $\mu$ for every $\eta $.
\end{remark}

%
%
%
Weak convergence to the Brownian excursion has been established in \cite{durrett77}. 
\begin{theorem}\label{thm:weak-conv-exc}
	The following weak limit holds:
	\begin{equation}\label{eq:weak-conv-exc}
	B^\mu \Big |\Lambda_{u,v,c} 
 \xRightarrow[u,c\downarrow v]{}
	B^\mu \Big | \Big \{ \inf_{ 0 < z < t } B^\mu(z) > v, B^\mu(0) = v, B^\mu(t) =c \Big \}
	\end{equation}
\end{theorem}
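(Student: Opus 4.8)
The plan is to follow the scheme of \cref{sec:joint}: show that the family of laws $\{\mu_{u,v,c}\}_{u,c>v}$ induced by $B^\mu|\Lambda_{u,v,c}$ is tight, that its finite-dimensional distributions converge as $u,c\downarrow v$, and then conclude with \cref{thm:weak-conv}. The crucial simplification is \cref{thm:exc-fin-dim}: by \eqref{eq:exc-no-drift-eq} one has $B^\mu|\Lambda_{u,v,c}\stackrel{law}{=}B|\Lambda_{u,v,c}$ for every $u,c>v$, and weak convergence depends only on the laws, so the statement reduces to the driftless case, where the convergence is established in \cite{durrett77}. Nonetheless I would also carry out the two steps directly, since the relevant estimates are already available from the meander analysis.

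For tightness I would apply \cref{pr:kolmo-cen-tight}. Condition~(i) is immediate since $B^\mu(0)=u$ a.s.\ with $u$ in a bounded neighbourhood of $v$. For condition~(ii) I would estimate $\mathbb E\big[|B^\mu(t_2)-B^\mu(t_1)|^\alpha\,\big|\,\Lambda_{u,v,c}\big]$, for a suitable even integer $\alpha$, from the one- and two-point marginals read off \eqref{eq:exc-fin-dim}. Since those marginals do not contain $\mu$, each is a product of reflected heat kernels on $(v,\infty)$ divided by the fixed constant $\big(e^{-(c-u)^2/2t}-e^{-(2v-c-u)^2/2t}\big)/\sqrt{2\pi t}$; discarding the nonnegative subtracted reflection terms and enlarging the spatial integrations to $\mathbb R$ then bounds the conditional moment, exactly as in the proof of \cref{lem:tight-cond-kolmo}, by $C\,|t_2-t_1|^{\alpha/2}$, with $C$ uniform for $u,c$ in a compact subset of $(v,\infty)$ and, after first letting $u,c\downarrow v$, by a slightly larger constant times $|t_2-t_1|^{\alpha/2}$ --- which suffices to control the tail of the net, just as in \cref{thm:tight-bmd}. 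I expect the one delicate point to be an increment having an endpoint at the pinning time $t$: there the numerator factor $e^{-(c-y)^2/2(t-s)}-e^{-(2v-c-y)^2/2(t-s)}$ degenerates like $O(c-v)$, while the normalising denominator carries a matching factor of order $(u-v)(c-v)$, and keeping track of this cancellation is the main technical obstacle.

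For the finite-dimensional distributions I would pass to the limit $u,c\downarrow v$ in \eqref{eq:exc-fin-dim} by a de l'H\^opital / Taylor expansion in the spirit of \cref{lem:fin-dim}: writing $u=v+a$ and $c=v+b$ with $a,b\downarrow 0$, one has $e^{-(y_1-u)^2/2s_1}-e^{-(2v-y_1-u)^2/2s_1}\sim 2a\,\tfrac{y_1-v}{s_1}\,e^{-(y_1-v)^2/2s_1}$, similarly $e^{-(c-y_n)^2/2(t-s_n)}-e^{-(2v-c-y_n)^2/2(t-s_n)}\sim 2b\,\tfrac{y_n-v}{t-s_n}\,e^{-(y_n-v)^2/2(t-s_n)}$, and $e^{-(c-u)^2/2t}-e^{-(2v-c-u)^2/2t}\sim 2ab/t$, so the factors $a$ and $b$ cancel between numerator and denominator and the limit is finite; the intermediate factors $j=2,\dots,n$ are already free of $u$ and $c$ by Markovianity (\cref{lem:markov-cond}). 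For $n=1$ this recovers the drift-free density \eqref{eq:exc-intro}, and for general $n$ the analogous product formula. On $[t,T]$ the conditioned path is just an unconstrained $B^\mu$ issued from $c$, which converges as $c\downarrow v$ to the one issued from $v$ by continuity of Brownian motion in its starting point, so the convergence of the finite-dimensional laws holds on all of $[0,T]$. Combining this with the tightness, \cref{thm:weak-conv} gives \eqref{eq:weak-conv-exc}.
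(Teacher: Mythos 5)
Your primary argument---using \eqref{eq:exc-no-drift-eq} to identify the law of $B^\mu|\Lambda_{u,v,c}$ with that of the driftless pinned process and then invoking \cite{durrett77}---is exactly the paper's route: the paper gives no proof at all beyond the sentence that weak convergence to the Brownian excursion has been established in \cite{durrett77}, the drift-independence of \cref{thm:exc-fin-dim} making the reduction immediate. Your supplementary direct sketch (tightness plus the $u=v+a$, $c=v+b$ expansion of the finite-dimensional densities, whose $n=1$ case correctly reproduces \eqref{eq:exc-intro}) goes beyond what the paper records and is consistent with its Section~3 machinery, so the proposal is correct and takes essentially the same approach.
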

%

We give the explicit form of the finite-dimensional distributions of the right member of 
\eqref{eq:weak-conv-exc} in the special case when $ v =0 $. 
For $u\to 0$ the distribution \eqref{eq:exc-fin-dim} becomes
\begin{align*}
&
	P \bigg\{ \bigcap_{j=1}^n \left( B^\mu(s_j) \in \mathrm d y_j \right) \,\Big \vert \inf_{0< z < t} B^\mu(z)> 0 , B^\mu(0)=0, B^\mu(t) = c\bigg \} \\
	&=
	\lim_{ u \downarrow 0}
	P \bigg\{ \bigcap_{j=1}^n \left( B^\mu(s_j) \in \mathrm d y_j \right) \,\Big \vert \min_{0\leq z \leq t} B^\mu(z)> 0 , B^\mu(0)=u, B^\mu(t) = c\bigg \}
	\\
	& =
	\frac{y_1\, t \sqrt t }{c\, s_1 \sqrt{s_1}} e^{- \frac{y_1^2}{2 s_1} + \frac{c^2}{2t}}
	\prod_{j=2}^{n} 
	\left[
	\frac{
		e^{ - \frac{  (y_j - y_{j-1})^2   }{   2(s_j - s_{j-1})   }      } 
		- 
		e^{ - \frac{  (y_j + y_{j-1})^2   }{   2(s_j - s_{j-1})   }      } 
	}
	{
		\sqrt{2\pi ( s_j - s_{j-1})}
	}
	\right]
	\frac{
		e^{ - \frac{  (c - y_{n})^2   }{   2(t - s_{n})   }      } 
		- 
		e^{ - \frac{  (c + y_{n})^2   }{   2(t - s_{n})   }      } 
	}
	{
		\sqrt{2\pi ( t - s_{n})}
	}
\end{align*}
%

%
%
%
The further limit for $c \to 0 $ yields
\begin{align*}
&
P \bigg\{ \bigcap_{j=1}^n \left( B^\mu(s_j) \in \mathrm d y_j \right) \,\Big \vert \inf_{0< z < t} B^\mu(z)> 0 , B^\mu(0)=0, B^\mu(t) = 0\bigg \} \\
&=
\lim_{ u,c \downarrow 0}
P \bigg\{ \bigcap_{j=1}^n \left( B^\mu(s_j) \in \mathrm d y_j \right) \,\Big \vert \min_{0\leq z \leq t} B^\mu(z)> 0 , B^\mu(0)=u, B^\mu(t) = c\bigg \}
\\
& =
\frac{y_1\, t \sqrt t }{s_1 \sqrt{s_1}} e^{- \frac{y_1^2}{2 s_1}}
\prod_{j=2}^{n} 
\left[
\frac{
	e^{ - \frac{  (y_j - y_{j-1})^2   }{   2(s_j - s_{j-1})   }      } 
	- 
	e^{ - \frac{  (y_j + y_{j-1})^2   }{   2(s_j - s_{j-1})   }      } 
}
{
	\sqrt{2\pi ( s_j - s_{j-1})}
}
\right]
\frac{2 y_n}{t- s_n}
\frac{
	e^{ - \frac{   y_{n}^2   }{   2(t - s_{n})   }      } 
}
{
	\sqrt{2\pi ( t - s_{n})}
}.
\end{align*}

The one dimensional distribution of the excursion reads

\begin{align*}
& P\Big\{ B^\mu(s) \in \mathrm d y \Big |  \inf_{ 0 < z < t } B^\mu(z) > 0, B^\mu(0) = 0, B^\mu(t) =0 \Big \} = \numberthis \label{eq:bmd-bdg-dft-0-0} \\
&=
\sqrt \frac{2}{\pi} y^2 \left(\frac{t}{s(t-s)}\right) ^\frac 32 
e^{- \frac{ y^2t}{ 2 s (t-s)}} \, \mathrm d y \qquad y>0 \,,\,\, s<t.
\end{align*}

\subsection*{Acknowledgments}
We thank both referees for their accuracy in the analysis of the first draft of this paper.
They have detected misprints and errors and their constructive criticism has substantially 
improved the paper. 

\bibliographystyle{abbrvnat}
\bibliography{biblio}

\end{document}